\theoremstyle{definition}
\newtheorem{mydef}{Definition}[section]
\newtheorem{lemma}[mydef]{Lemma}
\newtheorem{theorem}[mydef]{Theorem}
\newtheorem{proposition}[mydef]{Proposition}
\newtheorem{definition}[mydef]{Definition}
\newtheorem{example}[mydef]{Example}
\newtheorem{remark}[mydef]{Remark}
\newtheorem{note}[mydef]{Note}
\def\monster{\mathbb{M}}
\def\E{\varepsilon}
\newcommand{\cf}[1]{\text{cf}(#1)}
\def\dist{\mathbf{d}}
\def\V{\mathbb{V}}
\def\ordencea{\prec_{\C{K}}}
\def\ck{\C{K}}
\def\cl{\C{L}}
\def\lsk{\operatorname{LS}(\ck)}
\def\cmet{{\bf cMet}}
\def\abel{{\bf Ab}}
\def\fabel{{\bf FrAb}}
\def\grp{{\bf Grp}}
\def\sets{{\bf Set}}
\newcommand\dotminus{%
  \ooalign{\hidewidth\raise1ex\hbox{.}\hidewidth\cr$\;-\;$\cr}%
}
\newcommand{\C}[1]{{\mathcal #1}}
\newcommand{\shcat}[1]{{\bf Sh}_#1}
\newcommand{\dc}[1]{\operatorname{dc}(#1)}
\newcommand{\metr}[1]{{\bf Met}_#1}
\newcommand{\metrstr}[2]{{\bf MetStr}_#1(#2)}
\newcommand{\psmetr}[1]{{\bf PsMet}_#1}
\newcommand{\psmetrstr}[2]{{\bf PsMetStr}_#1(#2)}
\newcommand{\metrc}[1]{{\bf Met}_#1^{\le}}
\newcommand{\psmetrc}[1]{{\bf PsMet}_#1^{\le}}
\def\homk{\operatorname{Hom}_\ck}
\def\restr{\upharpoonright}
\newcommand{\alglatt}[1][V]{\mathbb{#1}=\langle #1, +, 0, \le  \rangle}
\newcommand{\AEC}[2][K]{\langle \mathcal{#1}_{#2},\prec_{\mathcal{#1}_{#2}}  \rangle}
\mathchardef\mhyphen="2D
\newcommand{\gatp}{{\sf ga\mhyphen tp}}
\newcommand{\gaS}{{\sf ga\mhyphen S}}
\begin{document}

\title{Tameness in generalized metric structures}

\author[Lieberman]{Michael Lieberman}
\email{qmlieberman@vutbr.cz}
\urladdr{https://math.fme.vutbr.cz/Home/lieberman}
\address{Institute of Mathematics, Faculty of Mechanical Engineering, Brno University of Technology, Brno, Czech Republic}

\author[Rosick\'y]{Ji\v r\'i Rosick\'y}
\email{rosicky@math.muni.cz}
\urladdr{http://www.math.muni.cz/\textasciitilde rosicky/}
\address{Department of Mathematics and Statistics, Faculty of Science, Masaryk University, Brno, Czech Republic}
\thanks{The first and second authors were supported by the Grant Agency of the Czech Republic under the grant P201/12/G028. The third author was supported by Universidad Nacional de Colombia under the grants 41705 and 48359.}

\author[Zambrano]{Pedro Zambrano}
\email{phzambranor@unal.edu.co, phzambranor@gmail.com}
\address{Departamento de Matem\'aticas, Universidad Nacional de Colombia, AK 30 $\#$ 45-03 c\'odigo postal 111321, Bogota, Colombia.}

\maketitle


\begin{abstract}
We broaden the framework of metric abstract elementary classes (mAECs) in several essential ways, chiefly by allowing the metric to take values in a well-behaved quantale.  As a proof of concept we show that the result of \cite{boneyzam} on (metric) tameness under a large cardinal assumption holds in this more general context.  We briefly consider a further generalization to {\it partial metric spaces}, and hint at connections to classes of fuzzy structures, and structures on sheaves.
\end{abstract}

\section{Introduction}

This paper lies in the rapidly developing intersection of abstract model theory, large cardinals, and category theory.  On a technical level, we concern ourselves with generalizations of the phenomenon of tameness---an essential condition that ensures the behavior of types is determined by their restrictions to subsets of their domain of a (uniform) small size---and its derivability from a suitable large cardinal assumption using the method of accessible images (see e.g. \cite{LiRo17}).  The generalization itself, from abstract classes of complete metric structures to abstract classes of structures over quantale-valued spaces, is relatively straightforward: the chief interest, it is to be hoped, lies in the new connections that it affords.  In particular, the new framework, which we refer to as {\it $\V$-abstract elementary classes}, or {\it $\V$-AECs}, encompasses new classes of examples, including probabilistic metric structures.  A further generalization to partial metric spaces (those in which self-distances need not be $0$) should lead to connections with abstract classes of fuzzy structures and sheaves, thereby broadening the applicability of a host of results and methods from abstract model theory to new areas, e.g. those describable in Lukasiewicz logic, rather than first-order or continuous logic.  

We here present only a brief introduction to the abstract model-theoretic framework that serves as our inspiration.  The origins of the current work lie in the notion of an {\it abstract elementary class}, or {\it AEC}, which was introduced by Shelah (\cite{shelaec}) as a framework in which to consider certain classes of mathematical structures that cannot be satisfactorily axiomatized in finitary first order logic, e.g. Artinian rings, or the complex numbers with exponentiation.  A comprehensive treatment of AECs can be found in \cite{baldwinbk}, and updates on the state-of-the-art in \cite{bv-survey-bfo} and \cite{shvaseycatmult}.  In essence, an AEC is a purely category-theoretic generalization of an {\it elementary class}---that is, the class of models of a (complete) first order theory---discarding syntax and retaining only the essential properties of elementary embeddings.  The move away from syntax allows us to work simultaneously on the model theory of a wide variety of logics, chiefly those generalized logics of the form $L_{\kappa,\omega}$ and $L(Q)$.  The allusion to category theory above is not a superficial one, incidentally: \cite{catthaspects} and \cite{bekerosicky} give, independently, characterizations of AECs as particular kinds of accessible categories (see Section~\ref{subseccatprelims} below).

The shift to AECs requires us to embrace a new, nonsyntactic notion of type; that is, Shelah's {\it Galois types} (or {\it orbital types}).  In short, given a model $M$, embeddings $f_i:M\to M_i$, $i=0,1$, and elements $a_i\in |M_i|$, we say that $(f_0,a_0)$ and $(f_1,a_1)$ are equivalent if there is a commutative square
$$\xymatrix@=3pc{
        M_1 \ar[r]^{g_1} & \bar{M} \\
        M \ar [u]^{f_1} \ar [r]_{f_0} &
        M_0 \ar[u]_{g_0}
      }$$
such that $g_0(a_0)=g_1(a_1)$.  The set of {\it Galois types over $M$} is precisely the set of equivalence classes of pairs $(f,a)$, with $f:M\to M'$.  Under the simplifying assumption of the existence of a monster model, i.e. a large, saturated, model-homogeneous model $\monster$, the Galois types over a model $M$ can be identified with the orbits of elements of $\monster$ under automorphisms of $\monster$ fixing $M$---that is, two elements of $\monster$ satisfy the same type if and only if they are indistinguishable up to the information contained in $M$.  To say that an AEC is $\chi$-tame, $\chi$ an infinite cardinal, is to say that if a model $M$ contains sufficient information to distinguish $a_0,a_1\in |\monster|$, then there is a submodel $N\ordencea M$ with cardinality less than or equal to $\chi$ that contains enough information to distinguish them.

This ability to reduce questions of equivalence of types to ones over models of a uniform small size has proven to be an essential ingredient in extant results on stability and categoricity transfer in AECs---a {\it dividing line}, in the sense of Shelah, between structure and nonstructure---see, e.g. \cite{bkv}, \cite{grva}, \cite{grvaonesucc}, \cite{bv-survey-bfo}.  Recently, Boney has shown (\cite{boneylarge}) that under the assumption of a proper class of strongly compact cardinals, every AEC is tame; that is, $\chi$-tame for some $\chi$.  This led to a flurry of work on tameness and large cardinals, culminating in \cite{boun}, where it is shown that tameness of AECs is in fact equivalent to the existence of a proper class of almost strongly compact cardinals.  Category theory makes a surprise appearance in this flurry, with the first and second authors noting in \cite{LRclass} that an old result of \cite{makkai-pare}---that the suitably closed image of any accessible functor is accessible assuming a proper class of strongly compact cardinals (improved to almost strongly compact cardinals in \cite{BT-R})---provides an alternate proof of Boney's Theorem.  This will form the backbone of our own result along these lines, in Section~\ref{sectame}.

The abstract classes of metric structures that we will consider here arise from a parallel development, spurred by the attempts in the `60s and `70s by Henson, Chang and Keisler, and others to develop a satisfactory first order theory of metric structures in general, and Banach spaces in particular. In fact, Shelah and Stern have shown that the full first-order theory of Banach spaces has the same essential behavior as second-order logic with quantification over countable sets, \cite{ShSt}. This means, in particular, that its Hanf number is above the least measurable cardinal---if there is one---which is utterly disqualifying.  Ways around this difficulty included restricting the set of formulas, resulting in the positive bounded logic of, e.g. \cite{hens}.  More significantly for us, it can also be resolved through the program of {\it continuous logic} introduced in \cite{changkeis}.  While continuous-valued logic makes it possible to analyze classes of Banach spaces with a toolkit similar to that of first-order logic (for example, continuous logic satisfies L\"owenheim-Skolem-Tarski and compactness theorems), there are some examples of classes of Hilbert spaces which are not axiomatizable in this logic (for example, Hilbert spaces with unbounded self-adjoint closed operators, \cite{Argoty}). This forces a further AEC-style generalization, and ultimately leads to the notion of a {\it metric AEC} (or {\it mAEC}), cf. \cite{hirhyt} or \cite{Za11}, which closely resembles an AEC, but with the following important modifications:
\begin{itemize}
\item The objects of an mAEC have underlying complete metric spaces, rather than discrete sets, and the interpretations of function and relation symbols in the ambient language must be suitably continuous.
\item The class need not be closed under unions of chains---the union of a chain of complete metric spaces need not be complete---but does contain the completion of such unions.  Put another way, an mAEC has directed colimits, but they need not be concrete (\cite{lrmcaec}).
\item In the L\"owenheim-Skolem axiom, cardinality is replaced by {\it density character}.
\end{itemize}
One can again define Galois types as orbits in a monster model $\monster$, although now $\monster$ is itself a metric space: the set of types over a model $M$ comes with a natural pseudometric, given by the Hausdorff distance between the corresponding orbits in $\monster$.  It is customary to make the additional technical assumption of the {\it continuity of types property} (or {\it CTP}, \cite{Za11}; also known in \cite{hirhyt} as the {\it perturbation property}), in which case this notion of distance is in fact a metric.  (Note: in \cite[5.2]{LiRo17} and in Definition~\ref{deftame} below, it is better to work without reference to the monster model, necessitating an alternative, but equivalent, definition of this distance.) This makes possible an $\epsilon-\delta$ reformulation of tameness---{\it {\bf d}-tameness}: for every $\epsilon>0$, there is $\delta>0$ such that if any types $p,q$ over $M$ are separated by more than $\epsilon$, their restrictions to some small $N\ordencea M$ are separated by more than $\delta$.  As in AECs, a stability transfer theorem in mAECs holds under $\mathbf{d}$-tameness, \cite{Za12}.  In \cite{boneyzam}, $\mathbf{d}$-tameness is shown to hold for all mAECs under the assumption of a proper class of strongly compact cardinals, via metric ultrafilters; in \cite{LiRo17}, it is shown to hold for all mAECs with $\epsilon=\delta$, and under the assumption of a proper class of almost strongly compact cardinals.  The latter again reduces the problem to the accessibility of images of accessible functors, slightly generalizing the argument from the discrete case, and providing the template for the still more general tameness result of Section~\ref{sectame}.

Indeed, the primary motivation for the present study is the desire to find the most general level at which this method of argument will work.  Careful examination reveals that one can toy as much as one likes with the axioms governing the metric spaces underlying the structures in the class---nothing fails, for example, if we consider classes of pseudometric structures, partial metric structures (those in which self-distances need not be zero, Section~\ref{secpartial}) or extended (that is, $({\mathbb R}\cup\{\infty\})$-valued) metric structures (\cite{lawveremetr}).  The latter suggests a much more interesting direction: instead of simply dropping metric axioms, can we replace ${\mathbb R}\cup\{\infty\}$ with an arbitrary {\it quantale} $\V$?  What properties of the quantale ${\mathbb R}\cup\{\infty\}$ are indispensable in this context?  As it happens, we require the quantale to be \emph{strongly Flagg} (see Definition~\ref{defsflagg}), a strengthening of the more familiar notion of continuity.  

This is of independent interest, but also points to important future applications: in Section~\ref{secpartial} we briefly consider structures over partial metric spaces, and restrict to the case where $\V=\Omega$ is a continuous frame.  There we suggest the following equivalences of categories (following \cite{partcors}, \cite{fuzzysetsi}, and \cite{fuzzysetsii}):

$\begin{array}{rcl} \text{complete partial }\Omega\text{-valued metrics} & \simeq & \text{complete }\Omega\text{-sets}\\
& \simeq & \text{sheaves on }\Omega\\
 & \simeq & \text{fuzzy sets over }\Omega\end{array}$
 
{\noindent}This means it should be possible to extend our result, almost for free, to the fuzzy and sheafy contexts---in either case, this would represent the first development of tameness and its connection to large cardinals.  We provide only the outline of this project here, leaving the details for future work.

The authors gratefully acknowledge the comments of the anonymous referee, which led to substantial improvements to this paper.

\section{Preliminaries}\label{secprelims}

\subsection{Accessible categories, accessible images}\label{subseccatprelims}

We here provide a few of the essential definitions concerning accessible categories---with examples---along with the result of \cite{BT-R} concerning the accessibility of powerful images that we will require in Section~\ref{sectame}.  For further details on accessible categories, see \cite{adros} or \cite{makkai-pare}.  The method of argument by accessibility of powerful images is explained and thoroughly illustrated in \cite{LiRo17}.  Throughout, we will assume a basic familiarity with category-theoretic terminology, along the lines of \cite{absconcrcats}.

As a basic intuition, an accessible category is one which is closed under sufficiently directed colimits (otherwise known, in many areas of mathematics, as {\it direct limits}), and where any object can be built---via a highly directed colimit---from a set of ``small'' objects.  To be more precise:

\begin{definition}\label{defacc}
	Let $\ck$ be a category, $\lambda$ an infinite regular cardinal.
	\begin{enumerate}
	\item We say that a poset $I$ is {\it $\lambda$-directed} if for any $A\subseteq I$ with $|A|<\lambda$, there is $i\in I$ such that $i\geq a$ for all $a\in A$.  A {\it $\lambda$-directed colimit} in $\ck$ is a colimit whose underlying diagram, $D:I\to \ck$, has $I$ $\lambda$-directed.
	\item An object $N$ in $\ck$ is $\lambda$-presentable if the associated hom-functor, 
	$$\homk(N,-):\ck\to\sets,$$
	preserves $\lambda$-directed colimits.  More transparently, $N$ is $\lambda$-presentable if for any $\lambda$-directed colimit $(\phi_i:M_i\to M\mid i\in I$) in $\ck$, any morphism $f:N\to M$ factors essentially uniquely through one of the the $M_i$; that is, $f=\phi_i\circ f_i$ for some $i\in I$.
	\item We say $\ck$ is {\it $\lambda$-accessible} if it
		\begin{itemize}
		\item has all $\lambda$-directed colimits,
		\item contains a set (up to isomorphism) of $\lambda$-presentable objects,
		\item and any object is a $\lambda$-directed colimit of $\lambda$-presentable objects.
		\end{itemize}
	We say $\ck$ is {\it accessible} if it is $\lambda$-accessible for some regular $\lambda$.
	\end{enumerate}
\end{definition}

\begin{example}\label{accexams}\mbox{ }
\begin{enumerate}
	\item {\bf $\lambda$-directed colimits:}
	\begin{enumerate}
		\item A general category $\ck$ has all directed colimits if and only if it has all colimits of chains---the former are reducible to the latter (e.g. \cite[1.7]{adros}).	  
		\item Following the suggestion concerning mAECs in the introduction above, the category $\cmet$ of complete metric spaces and contractions has directed colimits, but only the $\aleph_1$-directed colimits are concrete: if we take the union of an $\omega$-chain of complete spaces, the result need not be complete (\cite[4.5(3)]{lrmcaec}).
		\item For similar reasons, given an AEC $\ck$, the subcategory of $\lambda$-(Galois)-saturated objects will not generally have $\mu$-directed colimits when $\cf{\lambda}>\mu$.  In general, this category will contain colimits of short chains only under additional assumptions on $\ck$, e.g. tameness or stability (\cite{bvchains}). 
	\end{enumerate}
	\item {\bf Presentability and size:}
		\begin{enumerate}
		\item In $\grp$, the category of groups, an object is $\omega$-presentable (more commonly, {\it finitely presentable}) if and only if it is finitely presented in the classical sense.  In fact, in a general variety of finitary algebras with fewer than $\lambda$ operations, an algebra is $\lambda$-presentable if and only if it is $\lambda$-presented: that is, it is generated by fewer than $\lambda$ elements, subject to fewer than $\lambda$ equations \cite[3.12]{adros}.
		\item In $\cmet$, an object $M$ is $\lambda$-presentable if and only if its density character, $\dc{M}$, is less than $\lambda$, provided $\lambda$ is uncountable---recall that the density character of a complete space is the least cardinality of a dense subset.  $\cmet$ contains no $\omega$-presentable objects \cite[3.5]{lrmcaec}.
		\item More broadly, in any accessible category with directed colimits, the smallest cardinal $\lambda$ for which an object $M$ is $\lambda$-presentable---its {\it presentability rank}---is always a successor cardinal, say $\mu^+$(see \cite{bekerosicky}), so we may define the {\it internal size} of $M$ to be $\mu$.  This means, in specific cases, precisely what we would like: if $\ck$ is an AEC, the internal size of $M$ is its cardinality; if $\ck$ is an mAEC, the internal size of $M$ is its density character \cite[3.4]{lrmcaec}.\end{enumerate}
	\item {\bf Accessibility:}
	\begin{enumerate}\item The category $\grp$ is $\omega$-accessible (or {\it finitely accessible}): it has directed colimits, and any group can be expressed as the directed colimit of its finitely presented subgroups (which are finitely presentable, by \ref{accexams}(2)(a) above).  Similarly, any variety of finitary algebras with fewer than $\lambda$ operations is $\lambda$-accessible, for $\lambda$ a regular cardinal.
	\item The category $\cmet$ is $\aleph_1$-accessible, but not finitely accessible, in part by \ref{accexams}(1)(b) and \ref{accexams}(2)(b).  It is not elementary to see that a space can be obtained as an $\aleph_1$-directed colimit of its separable subspaces: an alternative argument can be given (see \cite[4.5(3)]{lrmcaec}).
	\item If $\ck$ is an AEC, closure under unions of chains---hence directed colimits, per \ref{accexams}(1)(a)---and the L\"owenheim-Skolem property ensure that $\ck$ is $\lambda$-accessible for all regular $\lambda>\lsk$, where $\lsk$ is the L\"owenheim-Skolem number of $\ck$.  The same holds for any mAEC, by the metric variants of the aforementioned axioms.\end{enumerate}
\end{enumerate}
\end{example}

We need a few terms associated with functors, as well:

\begin{definition}\label{functdefs}
	\begin{enumerate}\item 	We say that a functor $F:\ck\to\cl$ is {\it $\lambda$-accessible}, $\lambda$ regular, if $\ck$ and $\cl$ are $\lambda$-accessible and $F$ preserves $\lambda$-directed colimits.  We say $F$ is {\it accessible} if it is $\lambda$-accessible for some $\lambda$.
	\item Given a functor $F:\ck\to\cl$, the {\it powerful image} of $F$ is the closure of the image of $F$ under $\cl$-subobjects.\end{enumerate}
\end{definition}

\begin{example}\label{freeabexample}
Consider the free Abelian group functor, $F:\sets\to\abel$, which assigns to each set $X$ the free Abelian group generated by the elements of $X$.  This functor is finitely accessible.  The image of $F$ is the category of free abelian groups, $\fabel$---it is already closed under subobjects in $\abel$ (since any subgroup of a free abelian group is free Abelian), so it is in fact the powerful image of $F$.
\end{example}

This example is instructive in the following sense: the question of whether $\fabel$, the powerful image of $F:\sets\to\abel$, is accessible is highly dependent on the ambient set theory.  Per  \cite{eklofmek}, under V=L, it is not accessible; under the assumption of a strongly compact cardinal, it is.  The question of whether the same is true with respect to the powerful images of arbitrary accessible functors led to the result \cite[5.5.1]{makkai-pare}: if there are arbitrarily large strongly compact cardinals, the powerful image of any accessible functor is accessible.  We will use a recent refinement of this result, which is essentially \cite[3.4]{BT-R}:

\begin{theorem}\label{BTRTh}
Let $\lambda$ be a regular cardinal and $\C{L}$ an accessible category such that there exists a $L_{\mu_{\C{L}},\omega}$-compact cardinal $\kappa$. Suppose, moreover, that $\kappa \trianglerighteq \lambda$.  Then the powerful image of any $\lambda$-accessible functor $F:\C{K}\to\C{L}$ that preserves $\mu_{\C{L}}$-presentable objects is $\kappa$-accessible.
\end{theorem}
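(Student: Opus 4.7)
The plan is to pursue a compactness-style argument in infinitary logic, following the template of \cite[5.5.1]{makkai-pare} but keeping quantitative track of sizes so as to yield the explicit accessibility rank $\kappa$.

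First I would use the standard representation of accessible categories as categories of models of infinitary sketches. Since $\cl$ is accessible with canonical size bound $\mu_\cl$, its objects and morphisms are axiomatizable in $L_{\mu_\cl,\omega}$. Because $F$ is $\lambda$-accessible and preserves $\mu_\cl$-presentable objects, the values $F(K)$ for $\lambda$-presentable $K \in \ck$ are $\mu_\cl$-presentable in $\cl$; each is therefore (up to isomorphism) described by a single $L_{\mu_\cl,\omega}$-sentence, and the image of $F$ is axiomatizable by an infinitary disjunction of such. The powerful image $\C{I}$ of $F$ is then obtained by postulating, in addition, the existence of a subobject embedding into one of these $F(K)$---a clause of the same logical complexity.

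Next I would verify the two clauses of $\kappa$-accessibility for $\C{I}$ directly. Closure under $\kappa$-directed colimits follows from $\kappa \trianglerighteq \lambda$ together with the $\lambda$-accessibility of $F$ and preservation of filtered colimits of monomorphisms in $\cl$: a $\kappa$-directed colimit of diagrams $X_i \hookrightarrow F(K_i)$ reassembles as $\operatorname{colim} X_i \hookrightarrow F(\operatorname{colim} K_i)$. For the second clause---that every $X \in \C{I}$ is a $\kappa$-directed colimit of $\kappa$-presentable objects of $\C{I}$---I would write $X$ as a $\kappa$-directed colimit of its $\kappa$-presentable subobjects $X_i$ in $\cl$ (using $\kappa$-accessibility of $\cl$, which follows from $\kappa \trianglerighteq \lambda$ and accessibility of $\cl$), and show that each such $X_i$ lies in $\C{I}$.

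The main obstacle is exactly this last point, and it is where the large cardinal hypothesis enters. Given a $\kappa$-presentable subobject $X_i \hookrightarrow X$ with witness $X \hookrightarrow F(K)$, I would form the $L_{\mu_\cl,\omega}$-theory $T_{X_i}$ asserting ``there exist $K' \in \ck$ and an embedding $X_i \hookrightarrow F(K')$,'' consisting of the atomic diagram of $X_i$ (fewer than $\kappa$ sentences, since $X_i$ is $\kappa$-presentable) together with the infinitary sentence characterizing the image of $F$. Writing $K = \operatorname{colim}_{j \in J} K_j$ as a $\lambda$-directed colimit of $\lambda$-presentables and using $\lambda$-accessibility of $F$, every fragment of $T_{X_i}$ of size less than $\kappa$ is satisfied by some truncation factoring through an $F(K_j)$; here $\kappa \trianglerighteq \lambda$ is needed to route this factorization through a single sufficiently advanced $K_j$. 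By $L_{\mu_\cl,\omega}$-compactness of $\kappa$, the full theory $T_{X_i}$ is satisfiable, producing the required embedding $X_i \hookrightarrow F(K')$. The delicate point is the bookkeeping: ensuring the type has cardinality strictly below $\kappa$ and that each of its small fragments is realized by a genuine truncation of the ambient embedding into $F(K)$, which is where the strength encoded in $\kappa \trianglerighteq \lambda$ and the hypothesis on preservation of $\mu_\cl$-presentables both come into play.
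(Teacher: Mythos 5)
The paper itself contains no proof of Theorem~\ref{BTRTh}: it is imported as a black box from \cite[3.4]{BT-R}, in the formulation of \cite[2.6]{LiRo17}, so your proposal must be measured against that argument, which is not syntactic. It embeds $\C{L}$ into a category of structures, proves that the powerful image of $F$ is closed under ultraproducts indexed by $\mu_{\C{L}}$-complete ultrafilters (this is where preservation of $\mu_{\C{L}}$-presentable objects is spent), and obtains such ultrafilters by extending $\kappa$-complete filters via $L_{\mu_{\C{L}},\omega}$-compactness. Your opening axiomatization step, by contrast, is not available---not merely unproved, but false in ZFC. No single $L_{\mu,\omega}$-sentence characterizes a $\mu$-presentable structure up to isomorphism among \emph{all} structures (Scott-type sentences pin a structure down only among structures of small size); the image of $F$ contains $F(K)$ for arbitrarily large $K$, isomorphic to no $F(K_i)$ with $K_i$ $\lambda$-presentable; and ``there exists an embedding into a model of $\varphi$'' is not a condition expressible in the language of the given structure---it defines a projective (pseudo-elementary) class, which is precisely the phenomenon the theorem is about. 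The paper's own Example~\ref{freeabexample} refutes the claim outright: the image of the free-abelian-group functor is the class of free abelian groups, and any class of models of an $L_{\mu,\omega}$-theory is closed under $L_{\infty,\omega}$-equivalence, whereas an $\aleph_1$-free non-free group (e.g.\ the Baer--Specker group $\mathbb{Z}^\omega$) is $L_{\infty,\omega}$-equivalent to a free group without being free \cite{eklofmek}. Since your compactness argument for $T_{X_i}$ invokes ``the infinitary sentence characterizing the image of $F$,'' it inherits this failure.

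You have also inverted the location of the difficulty. The step you call the main obstacle---that a $\kappa$-presentable subobject $X_i$ of some $X$ in the powerful image again lies in the powerful image---is trivial and needs no large cardinal: $X_i \le X \le F(K)$, monomorphisms compose, and the powerful image is by definition closed under subobjects. The step you wave through is the genuinely hard one: closure of the powerful image under $\kappa$-directed colimits. A $\kappa$-directed diagram $(X_i)$ in the powerful image carries, for each $i$ separately, \emph{some} embedding $X_i \le F(K_i)$, but there are no connecting maps among the $K_i$ or the $F(K_i)$ compatible with the transition maps of the diagram, so ``$F(\operatorname{colim}_i K_i)$'' does not exist as written and nothing reassembles; producing a coherent system of witnesses is exactly the lifting problem that consumes the large-cardinal strength. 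In \cite{BT-R} this is resolved by embedding $\operatorname{colim}_i X_i$ into the ultraproduct $\prod_U X_i$, where $U$ is a $\mu_{\C{L}}$-complete ultrafilter extending the ($\kappa$-complete, by $\kappa$-directedness) filter of up-sets---this is where $L_{\mu_{\C{L}},\omega}$-compactness enters---and then invoking closure of the powerful image under such ultraproducts. That this closure cannot follow from $\kappa\trianglerighteq\lambda$ and accessibility alone, as you assert, is again witnessed by free abelian groups: under $V=L$ there are, for arbitrarily large regular $\kappa$, $\kappa$-directed unions of free abelian groups that are not free, which is exactly the failure of your reassembly claim.
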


In fact, this is a minor refinement of the \cite{BT-R} result, \cite[2.6]{LiRo17}.  Here $\mu_\cl$ is a cardinal computed from the structure of $\cl$---we refer readers to \cite[3.1]{BT-R} for details---and $\trianglerighteq$ denotes the {\it sharp inequality} of \cite{makkai-pare}.  For a rough intuition concerning the latter, note that, per \cite[2.5]{internal-sizes-v3}, if $\kappa>2^{<\lambda}$, $\kappa\triangleright\lambda$ if and only if $\kappa$ is $\lambda$-closed; that is, $\theta^{<\lambda}<\kappa$ for all $\theta<\kappa$.  We recall:

\begin{definition} A cardinal $\kappa\geq\mu$ is {\it $L_{\mu,\omega}$-compact} if any $\kappa$-complete filter can be extended to a $\mu$-complete ultrafilter.  We say $\kappa$ is {\it almost strongly compact} if it is $L_{\mu,\omega}$-compact for all $\mu<\kappa$.  It is {\it strongly compact} if this holds for all $\mu\leq\kappa$.\end{definition}

We adopt the terminology ``$L_{\mu,\omega}$-compact'' from \cite{BT-R}, as a way of sidestepping the multiple conflicting interpretations now associated with the original terminology: ``$\mu$-strongly compact'' (see e.g. \cite{apter}, \cite{menas}, and, more recently, \cite[4.6]{BaMa}).  In any case, at least in principle, the tameness result we derive from Theorem~\ref{BTRTh} in Section~\ref{sectame} will involve a large cardinal assumption weaker than strong compactness (see \cite[2.2]{boun} and \cite[2.4]{BT-R} for technical discussions related to this point).

\subsection{Quantales}\label{subsecquantprelims}

Quantales were first introduced in \cite{mulvquant} as a noncommutative generalization of {\it frames}; that is, complete lattices $\langle L,\leq\rangle$ in which the meet and join operations satisfy the following distributivity condition: for any $x\in L$, and $A\subseteq L$,
$$x\wedge \bigvee_{a\in A} a=\bigvee_{a\in A}(x\wedge a)$$
Mulvey's idea was to replace $\wedge$ with a (possibly noncommutative) binary operation $\star$ satisfying the same distributivity condition.  While his motivation was largely topological, our own route is rather different, and considerably more concrete: for us, quantales are ordered algebraic structures that can stand in for the extended nonnegative reals, $\langle [0,\infty],+,0,\le\rangle$, as the domain of values for a meaningful notion of distance.  

This will mean that, in time, we will wish to work with addition and underlying order dual to the ones that predominate in the category-theoretic literature.  In defining the basic properties we require, however, we will adhere to the more standard formulations.

\begin{definition}\label{defquantale}
	A \emph{commutative, unital quantale} consists of an ordered quadruple $(V,\star,\eta,\leq)$, where $(V,\leq)$ is a complete lattice and has the structure of a commutative monoid under binary operation $\star$, with neutral element $\eta$ and with the added property that $\star$ distributes over arbitrary joins.
\end{definition}

In the sequel, we will simply assume that our quantales are commutative and unital, without making explicit mention of these properties.  It is likely that much of the machinery of this paper can be adapted to, e.g. the noncommutative context, but we leave this for future work.

\begin{remark}\label{rmkadj}
    Note that the quantale operation $\star$ will always have a right adjoint, often denoted by $\mbox{hom}$; that is, for any $x$, $y$, and $z$,
    $$x\star y\leq z\mbox{ if and only if }y\leq\mbox{hom}(x,z).$$
    In additive quantales, this adjoint takes the form of truncated subtraction, $\dotminus$ (see, e.g. Example~\ref{exflaggquant}(2)). 
\end{remark}

Most of the properties that we require relate directly to the underlying lattice, $(V,\leq)$.  Recall:

\begin{definition}\label{deftotbelow}
	Let $(V,\leq)$ be a complete lattice.  Given elements $\alpha,\beta\in V$, we say that $\alpha$ is \emph{totally below} $\beta$ and write
	$$\alpha\lll \beta$$ 
	just in case for every $S\subseteq V$, if $\beta\leq\bigvee S$, then $\alpha\leq \gamma$ for some $\gamma\in S$.
\end{definition}

We note that this relation goes by a number of different names in the literature, including \emph{well-below}, \emph{super-way-below}, and \emph{long-way-below}, and that it is often denoted by $\triangleleft$ (cf. \cite{EGP}).  In any case, it should, in general, be carefully distinguished from the \emph{way below} relation, which holds if the above condition is satisfied only for \emph{directed} subsets $S\subset V$, and is typically denoted by $\ll$.

\begin{example}\label{exbottomprob}
In any complete lattice $\langle V,\leq\rangle$, the relations $\ll$ and $\lll$ will always disagree when it comes to the bottom element, $\bot$: while $\bot\ll\bot$, it is \emph{not} the case that $\bot\lll\bot$. To see this, notice that $\bot=\bigvee\emptyset$.  Since directed sets are necessarily nonempty, the condition for $\bot\ll\bot$ is satisfied trivially; the condition for $\bot\lll\bot$, on the other hand, is obviously false.
\end{example}

\begin{remark}
    One defines the \emph{totally above relation}, $\ggg$, and \emph{way above relation}, $\gg$, dually.  It is important to note that, in general, $\alpha\lll\beta$ is not equivalent to $\beta\ggg\alpha$ (and similarly for $\gg$).  We note, however, that the relation $\ggg$ (respectively, $\gg$) on a complete lattice $\langle V,\leq\rangle$ is precisely the totally below relation (respectively, way below relation) on $\langle V,\geq\rangle$.
\end{remark}

Following \cite{Wood}, we define the following condition, which will be essential in the sequel:

\begin{definition}\label{defconstrcompdist}
	A complete lattice $(V,\leq)$ is \emph{constructively completely distributive} if every $\alpha\in V$ can be represented as
	$$\alpha=\bigvee\{\beta\in V\,|\,\beta\lll \alpha\}.$$
	We say that a quantale is constructively completely distributive if its underlying lattice has this property.
\end{definition}

\begin{remark}\label{rmkccd}
	\begin{enumerate}
	    \item We note that any lattice that is completely distributive in the usual sense is constructively completely distributive; the converse holds under the Axiom of Choice (see \cite[5]{Wood}).
	    \item Constructively complete distributivity is to be distinguished from \emph{continuity}, which is the directed analogue: a complete lattice $(V,\leq)$ is \emph{continuous} if every $\alpha\in V$ can be represented as
	$$\alpha=\bigvee\{\beta\in V\,|\,\beta\ll \alpha\},$$
	where $\ll$ is the way below relation mentioned above.  We return to this point momentarily.
	\end{enumerate}
\end{remark}

\begin{definition}\label{defflaggq}
We say that a quantale $(V,\star,\eta,\leq)$ is a \emph{Flagg quantale} if it is constructively completely distributive and, moreover, for any $\alpha\in V$, $\alpha>\bot$, the set
$$\Downarrow\!\alpha=\{\beta\in V\,|\,\beta\lll \alpha\}$$
is up-directed; that is, $\Downarrow\!\alpha$ is nonempty and any finite subset of $\Downarrow\!\alpha$ has an upper bound in $\Downarrow\!\alpha$.
\end{definition}

\begin{remark}\label{rmkflaggterm} The second condition in the definition above is equivalent to the property that whenever $\alpha\lll\gamma$ and $\beta\lll\gamma$, $\alpha\vee\beta\lll\gamma$.  If we reverse the underlying order and write the quantale additively (that is, with $0$ in place of $\eta$ and $+$ in place of $\star$) we arrive at (a non-directed analogue of) the defining condition of the \emph{value distributive lattices} (or \emph{value quantales}) of Flagg, \cite[1.7]{Fl}.  This motivates the choice of terminology, which originates in \cite{RoTh}.  Flagg's definition also requires that $\bot\ll\eta$: it is clear that in a constructively completely distributive lattice, $\bot\lll\alpha$ for any $\alpha>\bot$, so, in particular, $\bot\lll\eta$.\end{remark}

\begin{proposition}\label{proprelscorresp}
    Let $\V=\langle V,\star,\eta,\leq\rangle$ be a Flagg quantale.  For any $\alpha$ and $\beta$ in $V$, $\beta>\bot$, $\alpha\lll\beta$ if and only if $\alpha\ll\beta$.  That is, the totally below and way below relations agree, except on the pair $(\bot,\bot)$. 
\end{proposition}

\begin{proof}
Note that the requirement $\beta>\bot$ is forced upon us by Example~\ref{exbottomprob}.  It is clear, of course, that $\alpha\ll\beta$ whenever $\alpha\lll\beta$.  To see that the converse is also true, suppose that $\alpha\ll\beta$ and that $\beta\leq\sup S$ for some $S\subset V$.  For simplicity, let $\gamma=\sup S$.  By constructively complete distributivity, 
$$\gamma=\sup\Downarrow\!\gamma.$$
Moreover, the set $\Downarrow\!\gamma$ is up-directed.  By the definition of the way below relation, $\alpha\leq\tau$ for some $\tau\in\Downarrow\!\gamma$; that is, $\tau\lll\gamma=\sup S$.  It follows that $\tau\leq\sigma$ for some $\sigma\in S$.  Hence $\alpha\leq\tau\leq\sigma$, meaning that $\alpha\lll\beta$.
\end{proof}

\begin{remark}\label{rmkflaggcrit} In particular, it follows that a quantale is Flagg just in case it is continuous and $\ll$ and $\lll$ correspond, except at $\bot$.  This criterion is often useful in verifying that familiar quantales are Flagg, as in Examples~\ref{exflaggquant}(1)-(4) below. \end{remark}

\begin{example}\label{exflaggquant}
\begin{enumerate}
\item{\bf Truth values:} Consider $\Omega=\{0,1\}$, with $0<1$.  Here the totally below relation is simply given by $0\lll 1$ and $1\lll 1$.  This becomes a Flagg quantale if we take meet to be the binary operation, and $\eta=1$ the neutral element.  We denote this quantale by $\Omega_{\wedge}$.
\item{\bf Distances:} Consider the extended nonnegative reals $[0,\infty]$ in the reverse of the natural ordering, with extended addition as $\star$---this is, of course, the \emph{Lawvere quantale}, which is inextricably linked to \emph{generalized metric spaces}.  The underlying lattice is known to be continuous, and it is clear that $\ll$ and $\lll$ correspond (in fact, they are precisely the same as the strict order), meaning that this quantale---which we denote by $[0,\infty]_+$---is Flagg, by \ref{rmkflaggcrit}.
\item{\bf Unit interval:} Consider $[0,1]$ with the standard ordering---clearly any quantale structure on this lattice will be Flagg, as above.  Consider, for example, the operation of \emph{\L ukasiewicz addition}:
$$x\oplus y=\max\{a+b-1,0\}.$$ 
As noted in \cite[2.15]{flaggkoppcontsp}, this quantale---the {\it quantale of errors}---is closely connected to Lukasiewicz logic.  We denote this quantale by $[0,1]_\oplus$ (following \cite{clemhof}).

It is worth mentioning that there are several other mathematically significant (Flagg) quantale structures on the unit interval with the standard ordering, namely $[0,1]_\wedge$, with meet as the quantale operation, and $[0,1]_*$, with multiplication as the quantale operation.  The latter is isomorphic to the Lawvere quantale; the former is connected with generalized \emph{ultrametric} spaces.
\item{\bf Dual of the unit interval:} Consider $[0,1]$ with the reverse of the standard ordering and operation given by {\it truncated addition} $\dotplus$: 
$$x\dotplus y=\min(x+y,1).$$  
That this quantale, $[0,1]^{op}_{\dotplus}$, is Flagg follows as for the Lawvere quantale, above.  Alternatively, notice that $[0,1]^{op}_{\dotplus}$ is isomorphic to $[0,1]_\oplus$.

\item{\bf Distance distribution functions:} By a {\it distance distribution function}, we mean a map $F:[0,\infty]\to [0,1]$ that satisfies $F(0)=0$ and is left continuous: for any $x\in [0,\infty]$, $F(x)=\sup_{y<x}F(y)$.  Let ${\Delta}$ denote the set of all distance distribution functions, ordered pointwise.  It is shown in \cite[1.3(3)]{clemhof}, for example, that $\Delta$ is constructively completely distributive as a lattice, with a particularly clean characterization of $\lll$, namely that for any $F,G\in\Delta$, $F\lll G$ if and only if there is $x\in[0,\infty]$ with $F(x)=0$ and $F(\infty)<G(x)$.  With this characterization, it is clear that, moreover, the sets $\Downarrow F$ are up-directed, meaning that any quantale structure on $\Delta$ will be Flagg.

We recall that any quantale operation $\bullet$ on $[0,1]$ determines a quantale operation $\star$ on $\Delta$, defined as follows:
for any $F,G\in\Delta$ and $x\in[0,\infty]$, 
$$(F\star G)(x)=\bigvee_{\alpha+\beta\leq x}F(y)\bullet G(y).$$
We denote the quantale structures arising from $\oplus$, $\wedge$, and $\ast$ in (4) above by, respectively, $\Delta_\oplus$, $\Delta_\wedge$, and $\Delta_\ast$---in a slight abuse of notation, we use the same symbols for the induced operations.  We note that in each case, the neutral element is given by
$$\eta(x)=\left\{\begin{array}{ll} 0 \hspace{5 mm}& \mbox{if }x=0\\ 1 & \mbox{otherwise}\end{array}\right..$$
\item{\bf Frames:} If $\Omega$ is a continuous frame---in particular, the lattice of opens of a locally compact topological space or, indeed, a locally compact locale---then it is, in particular, a Flagg quantale, with meet as the quantale operation.  Provided the space is compact and Hausdorff, the relations $\ll$ and $\lll$ will again coincide (cf. \cite[II.7.2]{picpult}).
\end{enumerate}
\end{example}

\begin{remark}\label{notationll} Since $\ll$ and $\lll$ correspond in any Flagg quantale, we will opt always for the simpler of the two notations in the sequel.\end{remark}

The following is dual to \cite[2.9]{Fl}:

\begin{proposition}\label{epsilonovertwo}
    Let $(V,\star,\eta,\leq)$ be a Flagg quantale.  For every $\epsilon\ll\eta$, there exists $\delta\ll\eta$ with $\epsilon\ll\delta\star\delta$. 
\end{proposition}

\begin{proof}
Since the operation $\star$ preserves $\leq$, it is clear that
$$\bigvee_{\delta\ll\eta}\delta\star\delta\geq\bigvee_{\alpha,\beta\ll\eta}\alpha\star\beta.$$
The latter is equal to
$$\left(\bigvee_{\alpha\ll\eta}\alpha\right)\star\left(\bigvee_{\beta\ll\eta}\beta\right)=\eta\star\eta=\eta.$$
In particular,
$$\bigvee_{\delta\ll\eta}\delta\star\delta\geq\eta$$
and, since $\epsilon\ll\eta$, this implies that for some $\delta\ll\eta$, $\epsilon\leq\delta\star\delta$: in fact, by \cite[1.3]{Fl}, $\delta$ may be chosen so that $\epsilon\ll\delta\star\delta$.
\end{proof}

The significance of this proposition is perhaps clearer if we write the quantale additively and work in the reverse ordering; that is, $(V,+,0,\geq)$.  In this case, Proposition~\ref{epsilonovertwo} says that for any $\epsilon$ with $\epsilon\gg 0$, there is $\delta$ with $0\ll\delta$ with $2\delta\ll\epsilon$---we can, therefore, conduct standard ``epsilon over two'' arguments in this context.

\begin{lemma}\label{Closeness}
Let $(V,\star,\eta,\leq)$ be Flagg. For any $y\in V$, $y=\bigvee \{y\star\E\mid \E\ll \eta \}$.
\end{lemma}
\begin{proof}
Since the quantale is Flagg, $\eta=\bigvee \{\E\in V\mid \E\ll \eta \}$. For any $y\in V$, $y=y\star\eta=y\star\bigvee \{\E\in V\mid \E\ll \eta \}=\bigvee \{y\star\E\mid \E\ll \eta\}$.
\end{proof}

This, again, is more intuitive in the additive case and under the reverse order: for any $y$, $y=\bigwedge\{y+\E\mid \E\gg 0\}$.

For technical reasons, we will occasionally require two additional properties, yielding what we call a \emph{strongly Flagg quantale}.

\begin{definition}\label{defsflagg}Let ${\mathbb V}=\langle V,\star,\eta,\leq\rangle$ be a quantale.
\begin{enumerate}
    \item We say that $\mathbb V$ \emph{has sequential approximation} if there is a sequence $(\alpha_n)_{n\in\omega}$ in $V$ such that
    \begin{itemize}
        \item $\bigvee_{n\in\omega}\alpha_n=\eta$.
        \item For all $n\in\omega$, $\alpha_n\lll\eta$.
        \item For all $n\in\omega$, $\alpha_n\leq \alpha_{n+1}$.
    \end{itemize}
    \item We say that $\mathbb V$ is \emph{totally monotonic} if for any $\alpha,\beta,\gamma\in V$, if $\alpha\lll\beta$, then $\alpha\star \gamma\lll \beta\star\gamma$.
    \item We say that $\mathbb V$ is strongly Flagg if it is Flagg, has sequential approximation and is totally monotonic.
\end{enumerate}
\end{definition}

\begin{example}\label{exsflagg}
\begin{enumerate}
    \item It is trivial to verify that the quantale of truth values $\Omega$ in Example~\ref{exflaggquant}(1) is strongly Flagg: since the neutral element $1$ satisfies $1\ll 1$, the constant sequence suffices to meet condition \ref{defsflagg}(2). 
    
    \item The quantales $[0,\infty]_+$, $[0,1]^{op}_\dotplus$, $[0,1]_{\oplus}$, $[0,1]_{\wedge}$, and $[0,1]_{*}$ clearly have sequential approximation.  All are totally monotonic: as the underlying lattices are chains, $\ll$ corresponds to $<$, and the operations are all monotonic in the sense of $<$.  Hence these quantales are all strongly Flagg.
    
    \item None of the quantales of distance distribution functions described in \ref{exflaggquant}(5) are strongly Flagg: although all appear to have sequential approximation, they are not totally monotonic.  
    
    To see this, we consider step functions of the form
    $$S_{\alpha,u}=\left\{\begin{array}{ll} 0\hspace{5 mm} & x\leq\alpha\\ u & x>\alpha\end{array}\right.$$
    for $\alpha\in[0,\infty]$ and $u\in[0,1]$.  Notice that $S_{\alpha,u}\ll\eta$ for all $\alpha>0$ and $\eta<1$.  
    
    The relation $\ll$ is certainly not preserved by the operation in the quantale $\Delta_*$, for example.  Consider $S_{1,1/2}$ and
    $$F(x)=\left\{\begin{array}{ll}
         0\hspace{10 mm} & x=0 \\
         1/4 & 0<x\leq 1\\
         1 & x>1
    \end{array}\right..$$
    In this case,
    $$(S_{1,1/2}*F)(x)=\bigvee_{\alpha+\beta\leq x} [S_{1,1/2}(\alpha)F(\beta)]=\left\{\begin{array}{ll}
       0\hspace{11 mm}  & x\leq 1\\
       1/8  & 1<x\leq 2\\
       1/2 & x>2
    \end{array}\right.$$
    and it is clear that for any $x\in[0,\infty]$ with $(S_{1,1/2}*F)(x)=0$, we have $F(x)\leq 1/4<1/2=(S_{1,1/2}*F)(\infty)$.  So $S_{1,1/2}*F$ is not totally below $F$, even though $S_{1,1/2}\ll\eta$.
    With a little additional work, the same pair of functions show that $\ll$ is not preserved in $\Delta_\wedge$ or $\Delta_\oplus$, either.
    \item Continuous frames need not be strongly Flagg, as they too will typically fail to be totally monotonic.  Given a compact Hausdorff space $X$, $X$ is locally compact, meaning that its poset of opens, $\mathcal{O}(X)$, is a continuous frame.  It therefore forms a Flagg quantale under the meet operation, with neutral element $\eta=X$.  While compactness of $X$ implies that $\eta\ll\eta$, it is clearly not true in general that $U\ll U$ for arbitrary $U\in\mathcal{O}(X)$: this holds, in particular, only if every $U\in\mathcal{O}(X)$ is compact.
    This difficulty disappears if $X$ is, say, equipped with the finite complement topology---in this case, the quantale of opens is strongly Flagg.
\end{enumerate}
\end{example}

\section{$\V$-metric spaces.}\label{secvmet}

We now develop the theory of metrics with values in Flagg quantales $\V$, with the ultimate aim of speaking of $\V$-metric structures, $\V$-abstract elementary classes, and tameness.  For that purpose, it is far clearer if we use additive notation (that is, with $\star$ as $+$, $\eta$ as $0$) and work with the reverse ordering.  To be precise we will actually consider structures $\langle V,+,0,\leq\rangle$ where $\langle V,+,0,\geq\rangle$ is a strongly Flagg quantale.  In a slight abuse of notation, we will still refer to these structures as Flagg (or strongly Flagg, etc.) quantales, although now the characteristic properties are precisely the duals of those in Section~\ref{subsecquantprelims}.

\begin{definition}\label{defvmetr}
Let $\alglatt$ be a Flagg quantale.  A {\it $\V$-metric space} consists of a pair $(M,d)$, $d:M\times M\to\V$, where $d$ satisfies, for all $x,y,z\in M$,
	\begin{enumerate}
		\item (Reflexivity) $d(x,x)=0$
		\item (Symmetry) $d(x,y)=d(y,x)$
		\item (Triangle Inequality) $d(x,y)\le d(x,z)+ d(z,y)$
		\item (Equality) if $d(x,y)=0$, $x=y$.
	\end{enumerate}
	If $(M,d)$ satisfies only (1), (2), and (3), we say that it is a {\it $\V$-pseudometric space}.
\end{definition}

\begin{example}\label{ExVSpaces}
\begin{enumerate}
\item Let $\V=\langle V,+,0,\ge \rangle$ be Flagg.  Then $V$ itself is a $\V$-space, with $d(x,y)=(y\dotminus x)+(x\dotminus y)$.  Recall that $\dotminus$, truncated subtraction, is the left adjoint to $+$ (Remark~\ref{rmkadj}).
\item\label{truthmetr} Consider the (dual of the) quantale of truth values, $\Omega_\wedge$, introduced in Example~\ref{exflaggquant}(1).  An $\Omega_\wedge$-pseudometric space will consist simply of a set $M$ equipped with an equivalence relation $\equiv_{M}$ given by $x\equiv_{M} y$ if and only if $d_M(x,y)=0$.  In an $\Omega_{\wedge}$-metric space, this equivalence relation is trivial: $\Omega_\wedge$-metric spaces are simply discrete sets.
\item\label{distmetr} Consider $[0,\infty]_+$, the (dual of the) quantale of distances given in Example~\ref{exflaggquant}(2). An $R^+$-pseudometric space $\langle M,d\rangle$ yields a distance mapping $d:M\times M\to [0,\infty]$. If $d$ is reflexive, transitive, symmetric and separated---that is, it satisfies the condition \ref{defvmetr}(4)---then $\langle M,d\rangle$ is a generalized pseudometric space.  The $[0,\infty]_+$-metric spaces, moreover, are precisely the generalized metric spaces.
\item\label{distdistmetr} Consider $\Delta_\oplus$, the (dual of the) quantale of distance distribution functions with operation induced by \L ukasiewicz addition, described in Example~\ref{exflaggquant}(5).  A $\Delta_\oplus$-pseudometric space consists of a set $M$, and a function $p:M\times M\to\Delta_\oplus$---assigning to each pair of elements of $M$ not a distance, but rather a distribution of distances---satisfying the following conditions:
	\begin{itemize}
	\item $p(x,x)=\eta$
	\item $p(x,y)=p(y,x)$
	\item $p(x,y)\leq p(x,z)\oplus p(z,y)$
	\end{itemize}
	which are, of course, precisely the axioms of a {\it probabilistic pseudometric space}.  If we consider $\Delta_\oplus$-metric spaces, i.e. those with the added condition that $p(x,y)=\eta$ implies $x=y$, we obtain a {\it probabilistic metric space} (see \cite{probmet}).
\item\label{framemetr} In (the dual of) a continuous frame $\Omega$, made into a quantale with $+$ as $\vee$, we note that the triangle inequality becomes
$$d(x,y)\leq d(x,z)\vee d(z,y),$$
so any $\Omega$-(pseudo)metric space is necessarily ultrametric.
\end{enumerate}
\end{example}

\begin{remark}\label{rmkvsps}
\begin{enumerate}
\item We note that the same notion can be packaged in terms of enriched categories---as noted in \cite{lawveremetr} for the quantale $[0,\infty]_{+}$ of extended nonnegative reals, there is an equivalence of categories between $[0,\infty]_{+}$-metric spaces (that is, generalized metric spaces) and categories enriched over $[0,\infty]_{+}$, with the latter thought of as a {\it symmetric, monoidal closed category}, $+$ as the tensor product).  In fact, \cite{lawveremetr} shows, in essence, that this holds for an arbitrary (nice) quantale, and considerable work has gone into developing this perspective, e.g. \cite{hofreisconv}, \cite{hofwaszenrich}.  We will not stress this point of view, but many of the examples of \cite{lawveremetr} loom large here, alongside those of \cite{Fl} and \cite{flaggkoppcontsp}.

\item There is, moreover, a close correspondence between $\Omega$-(pseudo)metric spaces, (separated) presheaves on $\Omega$, and $\Omega$-fuzzy sets---this is clear via their mutual connection to $\Omega$-sets, cf. \cite{fourmanscott} and \cite{fuzzysetsii}.  We note that the reflexivity axiom is a brutal restriction: it rules out all but those presheaves whose local sections are necessarily global, and fuzzy sets that are essentially crisp.  To obtain interesting results along these lines, one must drop reflexivity and shift to {\it partial} $\Omega$-metric spaces.  We return to this idea in Section~\ref{secpartial}.\end{enumerate}
\end{remark}

We note, too, that a $\V$-metric space comes equipped with a natural topology, allowing us to make sense of completeness in this context:

\begin{definition}
Let $(M,d)$ be a $\V$-(pseudo)metric space, $\V$ a Flagg quantale. 
\begin{enumerate}
\item Given $x\in M$ and $\E \gg 0$, we define the {\it open ball of radius $\E$ and centered at $x$} as
\[ B_\E(x)=\{y\in M\mid \E\gg d(x,y)\}. \]
These open balls form the base for a topology on $M$ (closure under finite intersections follows from Remark~\ref{rmkflaggterm}).
\item We say that a sequence $(x_n)_{n\in\omega}$ in $M$ is {\it Cauchy} if for every $\E\gg 0$, there is $N\in\omega$ such that $\E\gg d(x_n,x_m)$ for all $n,m\ge N$.
\item We say that a sequence $(x_n)_{n\in\omega}$ converges to $x\in M$ if $\E\gg 0$ there is an $N\in\omega$ such that $x_n\in B_\E(x)$ for all $n\ge N$.
\item We say that $(M,d)$ is {\it Cauchy complete} (or, if there is no risk of confusion, {\em complete}) if every Cauchy sequence converges.
\end{enumerate}
\end{definition}

Although these notions are well-defined in a general Flagg quantale, we note that the additional assumption of sequential approximation, Definition~\ref{defsflagg}(1), has many clear benefits:
\begin{enumerate}
	\item It ensures that sequential closure corresponds to topological closure, even in this more general framework.
	\item It is precisely what is needed to ensure that Cauchy completeness corresponds to Lawvere's category-theoretic notion (cf.  \cite[2.27]{hofreisconv}), which corresponds to, say, sheaf-theoretic completeness.
	\item Building on (2) above, sequential approximation is sufficient to guarantee the existence of completions of $\V$-metric spaces, which we will need in connection with $\V$-AECs, beginning in the next section.
\end{enumerate}

We restrict our attention to complete $\V$-pseudometric and $\V$-metric spaces, $\V$ a Flagg quantale with sequential approximation.  We have many choices when it comes to the proper notion of morphism between such spaces, hence several options in defining the associated categories.  We make the following definitions:

\begin{definition}\label{defmetcats} We denote by $\metr{\V}$ the category of complete $\V$-metric spaces with morphisms the isometries, and by $\metrc{\V}$the category of complete $\V$-metric spaces with morphisms the nonexpanding (that is, {\it $1$-Lipshitz continuous}) maps.
We denote by $\psmetr{\V}$ and $\psmetrc{\V}$ the corresponding categories of complete $\V$-pseudometric spaces.\end{definition}

\begin{remark}\label{contractrmk} As is often the case in this paper, weaker choices of morphism are possible.  The choice of nonexpanding maps is motivated by the connection to enriched category theory (it is shown in \cite{lawveremetr}, for example, that $\metrc{{[0,\infty]_+}}$ is the enriched category $[0,\infty]_+$-{\bf Cat}) and the fact that, in the case $\Omega$ is a locally compact locale, $\metrc{\Omega}$ is equivalent to $\shcat{\Omega}$, the category of sheaves on $\Omega$ (this is the gist of \cite[pp. 1152-1153]{fuzzysetsi})
\end{remark}

\begin{proposition}\label{catsfinpres}
For any Flagg quantale with sequential approximation $\V$, the categories $\metr{\V}$, $\metrc{\V}$, $\psmetr{\V}$, and $\psmetrc{\V}$ are $\aleph_1$-accessible with directed colimits, and in each case the internal size of an object $M$ (see Example~\ref{accexams}(2)(c)) is $dc(M)$; that is, the density character of the space $M$.
\end{proposition}

This is easy to verify, with the argument paralleling the one for complete metric spaces in \cite[3.5]{lrmcaec}.

\section{$\V$-Abstract Elementary Classes}\label{secvaecs}

We now, finally, begin to introduce logic into the picture, beginning with the description of structures with underlying $\V$-pseudometric (or $\V$-metric) spaces.

\begin{definition}\label{defmetrstruct}
Let $\V$ be a Flagg quantale with sequential approximation.  Given a finitary signature $L$, a {\it $\V$-pseudometric structure} for $L$ is a tuple $\C{M}=\langle (M,d_M); \sigma^{\C{M}} \rangle_{\sigma\in L}$, with $d_M$ a $\V$-pseudometric, and interpretations of $L$-symbols satisfying the following conditions:
\begin{enumerate}
\item If $c\in L$ is a constant symbol, $c^{\C{M}}\in M$.
\item If $R\in L$ is a relational symbol of arity $1\le k<\omega$, then $R^{\C{M}}: M^k\to V$ is a nonexpanding map. 
\item If $f\in L$ is a function symbol of arity $1\le k<\omega$, $f^{\C{M}}: M^k\to M$ is a nonexpanding map.
\end{enumerate}
A {\it $\V$-metric structure} for $L$ is defined in precisely the same way, but with the requirement that $(M,d)$ be a $\V$-metric space. We say that a $\V$-pseudometric structure (respectively, $\V$-metric structure) is {\it complete} if $d_M$ is a Cauchy complete $\V$-pseudometric (respectively, $\V$-metric).  
The notion of {\it substructure}, $\subseteq_{\V,L}$, is clear: a sub-$\V$-metric structure is a subspace where the interpretations of symbols from $L$ are those inherited from the larger structure.
\end{definition}

\begin{note}
	We embrace the usual abuse of notation, conflating a $\V$-metric structure $\C{M}$ with its underlying space $\langle M,d\rangle$ or, indeed, with $M$ itself.
\end{note}

\begin{remark}
	As will become clear in Remark~\ref{vmetrstrexs} below, we lose some generality by insisting that the interpretation of function and relation symbols be nonexpanding, rather than uniformly continuous, as is more common.  In particular, Banach spaces will not fit into the current framework.  We feel comfortable with this choice for the following reasons:
	\begin{itemize}
		\item Nothing about the arguments presented here depends on the nature of this choice---those who wish to replace nonexpanding maps with continuous or uniformly continuous maps may do so, with no change in the results or proofs on offer.
		\item The connection between generalized metric structures and sheaf structures, hinted at in Section~\ref{secpartial}, involves an equivalence of categories in which sheaf operations correspond precisely to nonexpanding maps.  This suggests our choice is a natural one.
	\end{itemize}
\end{remark}

We choose a very strong notion of morphism between $\V$-metric structures, in keeping with the notion familiar from the abstract model theory of continuous logic.  

\begin{definition}\label{embeddings}
Let $M_1$ and $M_2$ be $\V$-pseudometric structures in signature $L$. A {\it $(\V,L)$-embedding} of $M_1$ into $M_2$ is a mapping $h:M_1\to M_2$ such that:
\begin{enumerate}
\item $h$ is 
an (injective) isometry; that is, for any $x,y\in M_1$, 
$$d_2(h(x),h(y))= d_1(x,y).$$
\item for any constant symbol $c\in L$, 
	$$h(c^{M_1})=c^{M_2}.$$ 
\item for any relation symbol $R\in L$ of arity $k$, and $x_1,\cdots,x_k\in M_1$, 
	$$R^{M_1}(x_1,\cdots,x_k)=R^{M_2}(h(x_1),\cdots,h(x_k)).$$
\item for any function symbol $f\in L$ of arity $n$ and $x_1,\cdots, x_n\in M_1$, $$h(f^{M_1}(x_1,\cdots.x_n))=f^{M_2}(h(x_1),\cdots,h(x_n)).$$
\end{enumerate}
\end{definition}

\begin{remark}
	We must explicitly require that our isometries are injective in the pseudometric case---without the assumption that distinct elements are at nonzero distance from one another, isometries need not be injective.
\end{remark}

In what follows, we will be interested primarily in \emph{complete} $\V$-pseudometric or $\V$-metric structures---this is motivated by the fact that in continuous model theory, and in mAECs, completeness is typically assumed.  To that end, we denote by $\psmetrstr{\V}{L}$ the category of all complete $\V$-pseudometric structures over $L$ and all $(\V,L)$-embeddings between them.  We define $\metrstr{\V}{L}$ analogously.

\begin{example}\label{vmetrstrexs}
\begin{enumerate}
    \item Following Example~\ref{ExVSpaces}(\ref{truthmetr}), an object of $\psmetrstr{{\Omega_{\wedge}}}{L}$ will consist of an underlying set $M$, equipped with an equivalence relation $\equiv_M$ induced by the metric, with interpretions of the function and relation symbols of $L$ well-defined with respect to $\equiv_M$.  The category $\metrstr{{\Omega_{\wedge}}}{L}$, moreover, will be precisely the category of (discrete) $L$-structures, in the usual sense.
	\item With the quantale of distances, $[0,\infty]_+$, things are very familiar, indeed.  The objects of $\metrstr{{[0,\infty]_+}}{L}$ have underlying complete generalized metric spaces.  Operations and relations are taken to be nonexpanding, hence also {\it uniformly continuous} (as is more often assumed, cf. \cite{Za11} or \cite{hirhyt}).  The morphisms are simply the symbol-preserving isometries.
	\item The category $\metrstr{{{\Delta_\oplus}}}{L}$ is very similar in its description, with the proviso, again, that distances are replaced by probability distributions thereof.
\end{enumerate}	
\end{example}

We now define the central notion: $\V$-AECs.  While we require completeness of the underlying spaces, we note, again, that this is intended to bring our notion in line with existing ones: none of the results in this paper, including Theorem~\ref{VTameness}, actually make use of completeness.

\begin{definition}\label{defvaecs}
Let $\V$ be strongly Flagg with sequential approximation.  Let $\C{K}$ be a class of complete $\V$-metric structures for a signature $L$ and $\ordencea$ a binary relation defined on $\C{K}$. We say that $\AEC[K]{}$ is a {\em $\V$-abstract elementary class} (or {\em $\V$-AEC}) iff:
\begin{enumerate}
\item $\ordencea$ partially orders $\C{K}$ and $M_1\ordencea M_2$ implies $M_1\subseteq M_2$.
\item $\C{K}$ respects $L$-isomorphisms; that is, if $h:M_1\stackrel{\approx}{\to} M_2$ is an $L$-isomorphism then $M_1\in \C{K}$ implies $M_2\in \C{K}$, and if $M\ordencea M_1$ then $f[M]\ordencea M_2$.
\item (Coherence) if $M_0\subseteq M_1\ordencea M_2$ and $M_0\ordencea M_2$ then $M_0\ordencea M_1$.
\item $\C{K}$ is closed under directed colimits.
\item (Downward L\"owenheim-Skolem) There exists an infinite cardinal $LS(\C{K})$ such that for any $M\in \C{K}$ and any $A\subseteq M$ there exists a $N\in\C{K}$ with $\dc{N}\le (LS(\C{K})+|A|)^+$ such that $A\subseteq N$ and $N\ordencea M$.
\end{enumerate} 
We define a {\em pseudo-$\V$-AEC} similarly, with $\C{K}$ a class of complete $\V$-pseudometric structures.
\end{definition}

\begin{remark}\label{rmkvaecsexs}
Notice that $\Omega_\wedge$-AECs are simply AECs in the usual sense (see Example~\ref{vmetrstrexs}(1)).  One might expect that an $[0,\infty]_+$-AEC would be an mAEC, and vice versa.  In fact, they differ in two important respects: 
\begin{enumerate}\item The underlying spaces of a $[0,\infty]_+$-AEC, of course, have metrics that take values in $[0,\infty]$, rather than $[0,\infty)$.
\item To reiterate a point from Example~\ref{vmetrstrexs}(2), the  interpretations of function and relation symbols in an $[0,\infty]_+$-AEC, or $\V$-AEC more generally, are taken to be nonexpanding.  In an mAEC, they are typically assumed only to be continuous, or uniformly continuous.  As we have mentioned, there is considerable freedom here: the story that follows will not change one iota if we opt for one of these weaker options.\end{enumerate}
\end{remark}

\begin{definition}[$\ordencea$-embedding]
Let $\C{K}$ be a (pseudo-)$\V$-AEC. A {\it $\ordencea$-embedding} (or {\it $\C{K}$-embedding}) is a function $f:M\to N$, $M,N\in \C{K}$, that is an $L(\C{K})$-embedding in the sense of Definition~\ref{embeddings} and such that $f[M]\ordencea N$.
\end{definition}

\begin{proposition}\label{propvaecsize}
	For any $\V$-AEC (or pseudo-$\V$-AEC) $\ck$ in a finitary signature $L$, $\ck$ is an ${\lsk}^+$-accessible category with arbitrary directed colimits, and, moreover, these colimits are concrete.  Furthermore, an object $M\in\ck$ is $\kappa$-presentable for $\kappa>\lsk$ if and only if $\dc{M}<\kappa$.  That is, the internal size of an object is precisely its density character.
\end{proposition}
\begin{proof} The proof runs along the same lines as the corresponding proof for mAECs, \cite[3.1]{lrmcaec}.
\end{proof}

Once again, this argument goes through if we take the interpretations of function and relation symbols to be continuous, or uniformly continuous.

\section{Galois types in $\mathbb{V}$-AECs.}

We now define the notion of type that we will use: the \emph{Galois types} (or \emph{orbital types}) mentioned in the introduction.  Up to a very minor translation, our characterisation is the same as the one used across the literature on AECs and mAECs: types are identified with equivalence classes of pointed extensions of a common model.

\begin{definition}\label{Pairs}
Let $\V$ be a strongly Flagg quantale with sequential approximation, and let $\C{K}$ be a pseudo-$\V$-AEC.  Given $M\in \C{K}$, we define $\mathbb{K}^2_{M}$, the collection of pointed extensions of $M$, as follows: $$\mathbb{K}^2_{M}=\{(f,a)\mid f:M\stackrel{\ordencea}{\to} N, a\in N\}.$$
\end{definition}

\begin{definition}\label{EquivPairs}
Let $\V$ be a strongly Flagg quantale with sequential approximation, and let $\C{K}$ be a pseudo-$\V$-AEC.  Given $M\in\ck$ and a pair 
$(f_i:M\to N_i,a_i)\in \mathbb{K}^2_{M}$, $i=0,1$, 
we say that $(f_0,a_0)$ and $(f_1,a_1)$ are {\it equivalent}, denoted 
$$(f_0,a_0)\sim_{M} (f_1,a_1),$$ 
if and only if there exists an $N\in \mathcal{K}$ and $\ordencea$-embeddings $g_i:N_i\to N$, $i=0,1$, such that $g_0\circ f_0=g_1\circ f_1$ and $g_0(a_0)=g_1(a_1)$.
\end{definition}

Notice that $\sim_M$ is not necessarily an equivalence relation, but certainly will be with the following additional assumption:

\begin{definition}\label{defAP}
We say that a class $\ck$ has the \emph{amalgamation property} (or \emph{AP}) if for any $M\in\ck$ and pair of $\ck$-morphisms $f_i:M\to M_i$, $i=0,1$, there is an $N\in\ck$ and pair of maps $g_i:M_i\to N$ such that $g_0f_0=g_1f_1$.
\end{definition}

We will assume in the sequel that all of our classes satisfy both AP and the \emph{joint embedding property} (or \emph{JEP}): any two objects embed into a third.  The latter assumption is needed to guarantee the existence of a monster model, Remark~\ref{rmkmonstmetr} below.

\begin{definition}\label{SetTypes} Let $\V$ be a strongly Flagg quantale with sequential approximation, and let $\C{K}$ be a pseudo-$\V$-AEC.  For any $(f,a)\in \mathbb{K}^2_{M}$, we define the Galois type of $(f,a)$ over $M$ to be
$$\gatp(a/f)=[(f,a)]_{\sim_{M}}$$ 
where $[(f,a)]_{\sim_{M}}$ is the class of all pointed extensions in $\mathbb{K}^2_{M}$ $\sim_{M}$-equivalent to $(f,a)$.  We define the set of Galois types over $M\in\ck$ to be
$$\gaS(M)=\{\gatp(a/f)\mid (f,a)\in\mathbb{K}^2_{M} \},$$
Moreover, 
We say that a pair $(f,a)\in\mathbb{K}^2_{M}$ \emph{realizes} a Galois type $p$ over $M$, and write $(f,a)\models p$, just in case $[(f,a)]_{\sim_M}=p$.\end{definition}

Notice that we consider only Galois types of elements; that is, Galois $1$-types.  Nonetheless, all of the results here extend easily to types of finite tuples.  We now show that the set of Galois types comes with a natural $\V$-metric structure. 

\begin{definition}
Let $\V$ be a strongly Flagg quantale with sequential approximation, and let $\C{K}$ be a pseudo-$\V$-AEC.  Given $M$ in $\ck$ and $p=\gatp(a_0/f_0),q=\gatp(a_1/f_1)\in \gaS(M)$, we define $\dist(p,q)\in V$ as follows:
\[\dist(p,q)=\bigwedge\{ d_N(g_0(a_0),g_1(a_1))\mid g_i:N_i\to N, g_0\circ f_0 = g_1\circ f_1  \}\].
\end{definition}

\begin{remark}\label{rmkmonstmetr}
As mentioned in the introduction, it is more usual (see e.g. \cite{hirhyt}, \cite{Za11}, \cite{boneyzam}) to invoke a large, highly saturated {\it monster model} $\mathbb M$ which contains all relevant structures, to identify types over $M\in\ck$ with orbits in $\mathbb M$ under automorphisms fixing $M$, and to compute the distance between types $p_i=\gatp(a_i/f_1)$ over $M$, $i=0,1$, as the Hausdorff distance between the corresponding orbits:
$$\dist(p_0,p_1)=\inf \{d_{\mathbb M}(f(a_0),g(a_1))\mid f,g\in \text{Aut}(\mathbb{M}/M)\}$$
As mentioned after Definition~\ref{defAP}, AP and JEP---which we assume---are sufficient to guarantee the existence of such a model.  This perspective dramatically simplifies computational matters, and precisely matches the distance notion defined above.  From this perspective, we may speak of the realization of a type $p$ by an element $a\in\mathbb M$, which we denote $a\models p$, meaning, in particular, that $(i_M,a)\models p$, where $i_M:M\to \mathbb M$ is the inclusion.\end{remark}

The following property is reminiscent of \cite[3.2]{hirhyt}, insofar as it shows that proximity of types is witnessed by their realizations.  We include the proof here for the sake of completeness, and because we must now work with the totally above relation, $\gg$, rather than simple inequality.

\begin{proposition}\label{inf}
Let $\V$ be a strongly Flagg quantale, $\ck$ a pseudo-$\V$-AEC.  For any $\varepsilon\gg 0$ in $V$, for $p,q\in \gaS(M)$, $M\in\ck$, and any realization $a\models p$, there exists a realization $b\models q$
such that $\dist(p,q)+\varepsilon\gg d_\monster(a,b)$.
\end{proposition}
\begin{proof} Let $\varepsilon \gg 0$ in $V$ and $a\models p$. By monotonicity, Definition~\ref{defsflagg}(1), $\dist(p,q)+\E\gg \dist(p,q)=\bigwedge \{d_\monster(a,b): a\models p, b\models q\}$. By constructively complete distributivity, there exist $a'\models p$ and $b'\models q$ such that $\dist(p,q)+\E\gg d_\monster(a',b')$. Since $a,a'\models p$, there exists $f\in Aut(\mathbb{M}/M)$ such that $f(a') = a$. Notice that
$$\dist(p,q)+\varepsilon\gg d_\monster(a',b')=d_\monster(f(a'),f(b'))=d_\monster(a,f(b')).$$
We have that $f(b')\models q$, so $b=f(b')$ is the required element.
\end{proof}
\vspace{1mm}

Following \cite{Za11}, we have:

\begin{proposition}\label{dtypes_drealizations}
Let $\V$ be a strongly Flagg quantale, $\ck$ a pseudo-$\V$-AEC.  For all $\E\gg 0$ and $p,q\in \gaS(M)$, $M\in\ck$, such that $\E\gg \dist(p,q)$ and $b\models q$, then there exists $a_{\varepsilon}\models p$ such that $\varepsilon\gg d_\monster(a_{\varepsilon},b)$. 
\end{proposition}
\begin{proof}
Since $\E\gg \dist(p,q)=\bigwedge\{d(a,b): a\models p, b\models q \}$ , there exist $a'\models p$ and $b'\models q$ such that $\E\gg d(a',b')$, by constructively complete distributivity. Since $b',b\models q$, there exists $f\in Aut(\monster/M)$ such that $f(b')=b$. Notice that $f(a')\models p$ (because $a'\models p$ and $f$ fixes $M$ pointwise). Since $f$ is an isometry, $d(f(a'),b)=d(f(a'),f(b'))= d(a',b')$, meaning that $\E\gg d(f(a'),b)$. Notice that the statement of the proposition holds for $a_\E=f(a')$.
\end{proof}

\begin{proposition}\label{GaTypesVSpace}
Let $\V$ be a strongly Flagg quantale.  For any $M$ in a pseudo-$\V$-AEC $\ck$, $(\gaS(M), \dist)$ is a $\V$-pseudometric space.
\end{proposition}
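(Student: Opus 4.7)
The plan is to verify the three axioms of a $\V$-pseudometric space for $\dist$ in turn: reflexivity, symmetry, and subadditivity. Reflexivity and symmetry should be essentially formal consequences of the definition together with the corresponding properties of $d = d_\monster$, so the real work lies in subadditivity, where we need the approximation technology developed for cocontinuous quantales (especially Lemma~\ref{Divby2} and Lemma~\ref{Closeness}) together with the realization propositions \ref{inf} and \ref{dtypes_drealizations}.

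For reflexivity, given $p = \gatp(a/f) \in \gaS(M)$, taking $N_0 = N_1$ and $g_0 = g_1$ in the definition of $\dist$ shows that $\dist(p,p) \le d(g(a), g(a)) = 0$, hence $\dist(p,p) = 0$ since $0$ is the bottom element. Symmetry is immediate from the fact that $d$ is symmetric and the definition of $\dist$ is symmetric under swapping the roles of the two pointed extensions.

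For subadditivity, let $p, q, r \in \gaS(M)$; I want to show $\dist(p, r) \le \dist(p, q) + \dist(q, r)$. By Lemma~\ref{Closeness}, it suffices to show that for every $\E \gg 0$,
\[
\dist(p, r) \le \dist(p, q) + \dist(q, r) + \E.
\]
Given such an $\E$, use Lemma~\ref{Divby2} to pick $\delta \gg 0$ with $2\delta \ll \E$ (hence $2\delta \le \E$ by Fact~\ref{SuccGe}). Now, working inside the monster model, pick any $a \models p$; by Proposition~\ref{inf} applied to $p$ and $q$, there exists $b \models q$ with $d(a, b) \ll \dist(p, q) + \delta$, and applying Proposition~\ref{inf} again to $q$ and $r$ with this chosen $b$, there exists $c \models r$ with $d(b, c) \ll \dist(q, r) + \delta$. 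The triangle inequality for $d$ combined with Proposition~\ref{Monotonicity} (and transitivity of $\gg$, which follows from Fact~\ref{PropLL}(1)) yields
\[
d(a, c) \le d(a, b) + d(b, c) \ll \dist(p, q) + \dist(q, r) + 2\delta \le \dist(p, q) + \dist(q, r) + \E.
\]
Since $a \models p$ and $c \models r$, $\dist(p, r) \le d(a, c)$ by definition, completing the inequality for this $\E$ and thus the proof.

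The main subtlety I anticipate is bookkeeping around the way-below relation $\ll$ in place of plain $\le$: specifically, ensuring that $\gg$ is preserved under the two applications of Proposition~\ref{inf} and under the additive step, which is where Fact~\ref{PropLL}(1) and Proposition~\ref{Monotonicity} come in, and finally translating a bound with $\ll$ back into a bound with $\le$ via Fact~\ref{SuccGe} so that Lemma~\ref{Closeness} can close the argument. Everything else is routine once this bookkeeping is in place.
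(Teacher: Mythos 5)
Your proposal is correct and follows essentially the same route as the paper's proof: reflexivity and symmetry handled formally, and subadditivity via Lemma~\ref{Divby2} to halve $\E$, two applications of Proposition~\ref{inf} chained through a common realization of $q$, the triangle inequality with Proposition~\ref{Monotonicity} and Fact~\ref{SuccGe}, and Lemma~\ref{Closeness} to remove the $\E$. The only differences are cosmetic bookkeeping about when $\ll$ is converted to $\le$.
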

\begin{proof}
\begin{enumerate}
\item (Reflexivity) It is clear that for any $p\in \gaS(M)$, $\dist(p,p)=\bigwedge\{d(a,b):a\models p, b\models p\}=0$.
\item (Symmetry) Let $p,q\in \gaS(M)$. Since $\monster$ is a $\V$-pseudometric space, $d$ is symmetric, and thus
$$\dist(p,q)=\bigwedge\{d(a,b): a\models p, b\models q\}=\bigwedge\{d(b,a): a\models p, b\models q\}=\dist(q,p).$$
\item (Triangle Inequality) Let $\E\gg 0$. By Lemma~\ref{epsilonovertwo}, there exists $\delta\gg 0$ such that $\E\gg 2\delta\gg 0$. By Proposition~\ref{inf} and the fact that $\gg$ refines $\geq$, for any $a\models p$ there exists $b_\delta\models q$ such that $d(a,b_\delta)\le \dist(p,q)+\delta$. For this $b_\delta$, the same reasoning yields $c_\delta\models r$ such that $d(b_\delta,c_\delta)\le \dist(q,r)+\delta$. So
\begin{align*}(\dist(p,q)+\dist(q,r))+\E & \gg (\dist(p,q)+\dist(q,r))+(2\delta)\\ 
 & \geq d(a,b_\delta)+d(b_\delta,c_\delta)\\
 & \geq d(a,c_\delta).\end{align*} 
By definition of $\dist(p,r)$, $\dist(p,r)\le d(a,c_\E)\le (\dist(p,q)+\dist(q,r))+\E$. Since $\E\gg 0$ is arbitrary, Lemma~\ref{Closeness} implies that $\dist(p,r)\le \dist(p,q)+\dist(q,r)$.
\end{enumerate}
\end{proof}

\begin{remark}
Notice that Flaggness of $\V$ (in the form of the ``$\E/2$'' property) features only in the proof of the triangle inequality for $\dist$, and sequential approximation is not needed.	
\end{remark}

Even if we are working in a $\V$-AEC, $(\gaS(M), \dist)$ need not be a $\V$-metric space.  We require a further assumption:

\begin{definition}\label{CTP}
We say that a $\V$-AEC satisfies the \emph{continuity of types property} (or \emph{CTP}) if it has the following property: for any $a\in \monster$ and $p\in \gaS(M)$, if for any $\E\gg 0$ there exists some $b\models p$ such that $d(a,b)\le \E$, then $a\models p$.
\end{definition}

The following is an easy exercise (see e.g. the discussion following \cite[3.2]{hirhyt}):

\begin{proposition}\label{propctpmetr}
	For any $M$ in a $\V$-AEC $\ck$ satisfying the CTP, $(\gaS(M), \dist)$ is a $\V$-metric space.
\end{proposition}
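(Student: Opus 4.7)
By Proposition~\ref{GaTypesVSpace}, $(\gaS(M),\dist)$ is already a $\V$-pseudometric space, so reflexivity, symmetry, and subadditivity are in hand. The only remaining axiom (Definition~\ref{defvmetr}(4)) is separation: if $p,q\in\gaS(M)$ and $\dist(p,q)=0$, then $p=q$. The plan is to translate $\dist(p,q)=0$ into the hypothesis of the CTP and conclude that a fixed realization of $p$ is also a realization of $q$.

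Concretely, I would begin by fixing an arbitrary realization $a\models p$ inside the monster model $\monster$. For each $\E\gg 0$, Proposition~\ref{inf} produces some $b_{\E}\models q$ with
\[ d(a,b_{\E})\ll \dist(p,q)+\E. \]
Since $\dist(p,q)=0$, this simplifies to $d(a,b_{\E})\ll \E$, and then Fact~\ref{SuccGe} upgrades this to $d(a,b_{\E})\le \E$. Thus for every $\E\gg 0$ there exists $b_{\E}\models q$ with $d(a,b_{\E})\le\E$, which is precisely the hypothesis of Assumption~\ref{CTP} applied to $a$ and $q$. The CTP then yields $a\models q$. Since $a$ simultaneously realizes $p$ and $q$, and Galois types over $M$ are identified with orbits in $\monster$ under $\mathrm{Aut}(\monster/M)$, we conclude $p=q$.

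\textbf{Anticipated obstacle.} There is no deep obstacle; the only minor subtlety is the passage between the strict relation $\ll$ coming from Proposition~\ref{inf} and the non-strict $\le$ required by the CTP. This is handled uniformly by Fact~\ref{SuccGe}. One might also worry about whether a realization $a\models p$ exists in $\monster$, but this is guaranteed by the standing monster-model framework in which types are defined as orbits, so any $p\in\gaS(M)$ is realized in $\monster$ by definition.
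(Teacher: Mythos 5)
Your proposal is correct and is precisely the argument the paper intends: the paper dispenses with this proposition as an easy exercise (pointing to the discussion following \cite[3.2]{hirhyt}), and the standard solution is exactly your route---pseudometric axioms from Proposition~\ref{GaTypesVSpace}, then separation by feeding $\dist(p,q)=0$ into Proposition~\ref{inf}, upgrading $\ll$ to $\le$ via Fact~\ref{SuccGe}, and invoking the CTP (Assumption~\ref{CTP}) to get a common realization, whence $p=q$. No gaps to report.
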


\section{Tameness in $\mathbb{V}$-AECs.}\label{sectame}
In this section, we prove the set-theoretical consistency of a metric version of tameness in pseudo-$\mathbb{V}$-AECs, by a slight generalization of the methods in~\cite{LiRo17}. The proof given in~\cite{LiRo17} makes essential use of the property of $\mathbb{R}$ that given any real  number $\delta>0$ we can find a sequence $(\delta_n)_{n<\omega}$ ($\delta_n>0$) which converges to $\delta$. In the argument that follows, we attain the same effect through the sequential approximation property of strongly Flagg quantales (Definition~\ref{defsflagg}(2)).

In what follows, we will speak freely of \emph{presentability}, in the sense of Definition~\ref{defacc}(2).  Recall that, by Proposition~\ref{propvaecsize}, an object $M$ in a (pseudo)-$\V$-AEC $\ck$ is $\kappa$-presentable for $\kappa>\lsk$ if and only if $\dc{M}<\kappa$.

\begin{definition}\label{deftame}
	Let $\V$ be strongly Flagg.  We say that a pseudo-$\V$-AEC $\C{K}$ is {\it $\kappa$-$\V$-tame} if for any $\epsilon\gg 0$ there is a $\delta\gg 0$ such that for any $M\in \C{K}$ and Galois types $p_i=\gatp(a_i/f_i)$ with $f_i:M\to N_i$, $i=0,1$, if $\delta\gg\dist(p_0\circ \chi,p_1\circ \chi)$ for all $\chi:X\to M$ with $X\in\ck$ $\kappa$-presentable, then $\epsilon\gg\dist(p_0,p_1)$.
	
We say that $\C{K}$ is {\it strongly $\kappa$-$\V$-tame} if the above holds with $\delta=\epsilon$.

We say that $\C{K}$ is {\it (strongly) $\V$-tame} if it is (strongly) $\kappa$-$\V$-tame for some $\kappa$.
\end{definition}

In short, $\ck$ is strongly $\kappa$-$\V$-tame if $\E$-closeness of types over a general $M\in\ck$ is determined entirely by $\E$-closeness of their restrictions to subobjects $X\to M$ of size less than $\kappa^+$.  Recalling that accessible categories are precisely those determined by a set of small objects, this suggests the outline of the proof of tameness from almost strongly compact cardinals---if we can show that a suitable category of $\E$-close pairs of types is accessible, tameness should follow.  The broad outline will be familiar to readers of \cite{LRclass} and \cite{LiRo17}.  We note that the argument is nearly identical to that given for the analogous result on tameness of mAECs, \cite[5.5]{LiRo17}.  As such, we elide certain details, while carefully accounting for the minor modifications required in our context.

We now turn to the details.  We begin by defining categories $\C{L}_\E$ and $\C{L}$, together with a forgetful functor $G_\E:\C{L}_\E\to\C{L}$, that capture $\E$-closeness of types.

\begin{definition} \label{CategoryLE}
Let $\V$ be a strongly Flagg quantale, $\ck$ a pseudo-$\V$-AEC, and $U:\ck\to\psmetr{\V}$ the forgetful functor.  Given any $\E\gg 0$, as in  \cite{LiRo17} we define the category $\mathcal{L}_\E$ as follows:

\begin{enumerate}
\item The objects are of the form $(f_0, f_1, g_0, g_1, a_0, a_1)$ where $f_i: M \rightarrow N_i$, $g_i: N_i \rightarrow N$  are morphisms in $\mathcal{K}$ and $a_i \in UN_i$, $i=0,1$, such that there exists an isometry $h: 2_{\epsilon} \rightarrow UN$ (where $2_{\epsilon}$ is the 2-pointed $\V$-space of diameter $\epsilon$) such that $h(0) = U(g_0)(a_0)$ and $h(1) = U(g_1)(a_1)$.

$$\xymatrix@=10pt{ 
 & & &  2_{\epsilon} \ar[ld]^{h}\\
UN_0\ar[rr]^{U(g_0)} &  & UN &  & \\
N_0 \ar[r]^{g_0} & N  \\ 
M \ar[u]^{f_0} \ar[r]_{f_1} & N_1 \ar[u]_{g_1} 
& UN_1\ar[uu]_{U(g_1)} & 
}
$$

\item Let $(f_0, f_1, g_0, g_1, a_0, a_1), (f'_0, f'_1, g'_0, g'_1, a'_0, a'_1) \in Obj(\mathcal{L}_{\epsilon})$. A morphism from $(f'_0, f'_1, g'_0, g'_0, a'_0, a'_1)$ to $(f_0, f_1, g_0, g_0, a_0, a_1)$ in $\mathcal{L}_\E$ corresponds to a tuple $(t, u, v, w)$, where $t: N' \rightarrow N$, $u: N'_0 \rightarrow N_0$,  $v: M \rightarrow M$, $w: N'_1 \rightarrow N_1$ are $\mathcal{K}$-morphisms such that the following diagram commutes:

$$\xymatrix@=12pt{
 & N'\ar@/^{-3mm}/[ddd]_{t} & \\
 N_0'\ar[d]_{u}\ar@/^{2mm}/[ru]^{g'_0} & M\ar[l]_{f'_0} \ar[r]^{f'_1}\ar[d]^{v} & N_1'\ar[d]_{w}\ar@/_{2mm}/[lu]_{g'_1}\\
 N_0\ar@/_{2mm}/[rd]_{g_0} & M\ar[l]_{f_0} \ar[r]^{f_1} & N_1\ar@/^{2mm}/[ld]^{g_1}\\
 & N & 
}$$
with the added conditions that $U(w)(a_i')=a_i$, $i=0,1$.  It follows that $U(t)h'=h$.
\end{enumerate}
\end{definition}

\begin{definition}\label{CategoryL}
As in  \cite{LiRo17}, we define the category $\mathcal{L}$ as follows:
\begin{enumerate}
\item $Obj(\mathcal{L})$: $(f_0, f_1, a_0, a_1)$, where $f_0:M \rightarrow N_0$, $f_1:M \rightarrow N_1$ are $\C{K}$-morphisms and $a_0\in U(N_0),a_1\in U(N_1)$.

$$\xymatrix@=12pt{N_0&\\M\ar[u]^{f_0}\ar[r]_{f_1}&N_1}$$

\item  Given $(f_0, f_1, a_0, a_1),(f_0, f_1, a_0, a_1) \in Obj(\mathcal{L})$, a morphism from $(f'_0, f'_1, a'_0, a'_1)$ to $(f_0, f_1, a_0, a_1)$ in $\mathcal{L}$ is a tuple of the form $(u, v, w)$, where $u: N'_0 \rightarrow N_0$,  $v: M \rightarrow M$, $w: N'_1 \rightarrow N_1$ are $\mathcal{K}$-morphisms such that the following diagram commutes:

$$\xymatrix@=12pt{ 
 N'_0\ar[d]_{u} &   M \ar[l]_{f'_0} \ar[r]^{f'_1} \ar[d]_{v}& N'_1\ar[d]_{w}    \\
 N_0  & M \ar[l]^{f_0} \ar[r]_{f_1} & N_1     \\
}
$$
with $U(w)(a_i')=a_i$, $i=0,1$.
\end{enumerate}
\end{definition}

\begin{definition}\label{forgfunct} We denote by $G_\E$ the forgetful functor from $\cl_\E$ to $\cl$, 
$$ G_\E : (f_0,f_1,g_0,g_1,a_0,a_1)\mapsto(f_0,f_1,a_0,a_1).$$\end{definition}

\begin{remark}\label{thmprep}
	\begin{enumerate}
		\item Notice that if $(f_0,f_1,a_0,a_1)$ belongs to the full image of $G_\E$, then the distance between the corresponding Galois types $\gatp(a_0/f_0)$ and $\gatp(a_1/f_1)$ is at most $\E$.
		\item The full image of $G_\E$ is closed under precomposition, by the same argument as in the proof of \cite[5.5]{LiRo17}.  In particular, the full image is powerful.
		\item One can easily verify that $\C{L}$, $\C{L_\E}$, and $G_\E$ are accessible.  Indeed, there exists a cardinal $\lambda$ such that for all $\epsilon\gg 0$, $G_\E$ is $\lambda$-accessible and preserves $\lambda$-presentable objects, by the remark following \cite[2.19]{adros}.
	\end{enumerate} 
\end{remark}

\begin{remark}\label{rmkrevmul}In the following theorem, we invoke the cardinal parameter $\mu_\cl$ associated with $\cl$, which was mentioned in the paragraph following Theorem~\ref{BTRTh}.  In fact, we will require a minor modification of the standard definition of that parameter, taking a sufficiently high seed cardinal $\gamma_\cl$ (in the terminology of \cite{BT-R}) to ensure that $\mu_\cl>2^{|\V|}$.  We denote the result by $\mu_{\cl}^{\V}$.\end{remark}

\begin{theorem}\label{VTameness}
Assuming the existence of a $L_{\mu_{\cl}^{\V},\omega}$-compact cardinal,  every pseudo-$\V$-AEC is strongly $\V$-tame.
\end{theorem}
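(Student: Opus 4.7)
The plan is to adapt the argument of \cite[5.5]{LiRo17} for mAECs, substituting the way above relation $\gg$ for strict inequality in ${\mathbb R}^+$ and relying on cocontinuity of $\V$ (chiefly Proposition~\ref{app1}) wherever the original proof uses arithmetic on ${\mathbb R}^+$.

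First, apply Theorem~\ref{BTRTh} to the forgetful functor $G_\E\colon \cl_\E\to\cl$. By Remark~\ref{thmprep}(3) a single regular $\lambda$ works for every $\E\gg 0$, and $\mu_\cl$ depends only on $\cl$, hence only on $\ck$. Pick a cardinal $\mu>\max(\mu_\cl,2^{|\V|})$ and invoke the hypothesis to obtain an $L_{\mu,\omega}$-compact $\kappa\trianglerighteq\lambda$. Theorem~\ref{BTRTh} then shows that the powerful image of $G_\E$ is $\kappa$-accessible for every $\E\gg 0$. By Remark~\ref{thmprep}(2), this coincides with the full image of $G_\E$, which we denote $\mathcal{I}_\E$; crucially, $\kappa$ is uniform in $\E$.

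Second, to prove strong $\kappa$-$\V$-tameness, fix $\E\gg 0$ and suppose $\dist(p_0\circ\chi,p_1\circ\chi)\ll\E$ for every $\chi\colon X\to M$ with $X$ $\kappa$-presentable in $\ck$. By Proposition~\ref{app1} it suffices to show $\dist(p_0,p_1)\ll\E+\eta$ for each $\eta\gg 0$. Fix such an $\eta$; Lemma~\ref{Divby2} provides a $\delta\gg 0$ with $2\delta\ll\eta$, and Proposition~\ref{Monotonicity} gives $\E+\delta\ll\E+\eta$. Using accessibility, write the corresponding object $(f_0,f_1,a_0,a_1)\in\cl$ as a $\kappa$-directed colimit of $\kappa$-presentable objects indexed by restrictions along small $\chi\colon X\to M$, together with small subobjects of $N_0,N_1$ carrying the relevant data. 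The hypothesis and Fact~\ref{SuccGe} place each restricted type-pair at distance $\leq\E$; Proposition~\ref{inf} applied with cushion $\delta$ then produces amalgamations showing that each such small object lies in $\mathcal{I}_{\E+\delta}$. Since $\mathcal{I}_{\E+\delta}$ is $\kappa$-accessible in $\cl$, hence closed there under $\kappa$-directed colimits, the colimit $(f_0,f_1,a_0,a_1)$ belongs to $\mathcal{I}_{\E+\delta}$, giving $\dist(p_0,p_1)\leq\E+\delta$. Fact~\ref{PropLL}(1), combined with $\E+\delta\ll\E+\eta$, then delivers $\dist(p_0,p_1)\ll\E+\eta$, as required.

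The principal obstacle is the colimit resolution of $(f_0,f_1,a_0,a_1)\in\cl$ as a $\kappa$-directed colimit of objects whose indexing data aligns precisely with the restriction morphisms $\chi\colon X\to M$ appearing in the tameness hypothesis. This is handled in \cite[5.5]{LiRo17} via a careful analysis of presentability in the analogous category of spans; the only essential modification needed in our setting is to replace numerical approximation by sequences $1/n\in{\mathbb R}^+$ with the approximation from above afforded by cocontinuity of $\V$, whose key consequences for us are Proposition~\ref{app1} and Lemma~\ref{Divby2}.
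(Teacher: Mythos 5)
There is a genuine gap, and it sits at the heart of your argument: the claim that ``Proposition~\ref{inf} applied with cushion $\delta$ then produces amalgamations showing that each such small object lies in $\mathcal{I}_{\E+\delta}$.'' Under the paper's Definition~\ref{CategoryLE}, membership of $(f_0\circ\chi,f_1\circ\chi,a_0,a_1)$ in the full image of $G_{\E+\delta}$ requires an amalgamation $g_0,g_1$ into some $N$ together with an \emph{isometry} $h:2_{\E+\delta}\to UN$ sending the two points to $Ug_0(a_0)$ and $Ug_1(a_1)$; since morphisms of $\psmetr{\V}$ preserve distances exactly, this means $d_N(Ug_0(a_0),Ug_1(a_1))$ must equal $\E+\delta$ \emph{exactly}, not merely be bounded by it (Remark~\ref{thmprep}(1) is a one-way implication for precisely this reason). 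Proposition~\ref{inf} only yields an amalgamation realizing \emph{some} distance $\ll \dist(p_0\restr\chi,p_1\restr\chi)+\delta$, a value which varies with $\chi$ and has no reason to equal $\E+\delta$. So the restricted objects land in full images $\mathcal{I}_{d_\chi}$ for \emph{different} parameters $d_\chi$, and the closure-under-$\kappa$-directed-colimits step, which needs all cofinally many restrictions to lie in the \emph{same} category $\mathcal{I}_{d}$, cannot be run. This is exactly the difficulty the paper's proof (following \cite[5.5]{LiRo17}) spends most of its length resolving: a first pigeonhole argument produces a cofinal set $D_\delta$ of restrictions all realizing the same type-distance $\delta\ll\E$ (possible because there are at most $|\V|$ candidate values), then SAFA plus Fact~\ref{SuccInf} produce, for each such $\chi$, exact witnessing distances $\delta_n$ with $\bigwedge_n\delta_n=\delta$, and a second pigeonhole extracts a cofinal $D_s\subseteq D_\delta$ realizing one and the same sequence $\langle\delta_n\rangle$. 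Only then does colimit-closure of the full images of the $G_{\delta_n}$ apply, giving $\dist(p_0,p_1)\le\delta_n$ for all $n$, hence $\dist(p_0,p_1)\le\delta\ll\E$. A telltale sign of the gap in your write-up is that your argument never genuinely uses the hypothesis $\mu>2^{|\V|}$: that bound exists precisely to make the pigeonhole over (sequences of) values in $\V$ work against $\kappa$-directed diagrams, and you have bypassed that step rather than replaced it.

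Your outer skeleton (uniform $\kappa$ from Theorem~\ref{BTRTh}, reduction via Proposition~\ref{app1} to showing $\dist(p_0,p_1)\ll\E+\eta$ for every $\eta\gg 0$) is fine, and your cushion idea \emph{could} be salvaged if one redefined $\cl_\E$ using a nonexpanding map $h:2_\E\to UN$ instead of an isometry, so that membership in the image means ``some amalgamation realizes distance $\le\E$''; with that definition Proposition~\ref{inf} does place every restriction in $\mathcal{I}_{\E+\delta}$, and one would then need to re-verify that this modified full image is still a sieve and that $G_\E$ is still suitably accessible. But that is a different construction from the paper's, and you neither propose it nor prove its required properties; as written, against the paper's own definitions of $\cl_\E$ and $G_\E$, the key membership step fails.
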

\begin{proof}
Let $\C{K}$ be a pseudo-$\V$-AEC with forgetful functor $U:\ck\to\psmetr{\V}$, and let $\C{L}$, $\C{L_\epsilon}$, and $G_\E: \C{L}_\E\to \C{L}$ be as described in~\ref{CategoryLE}, \ref{CategoryL}, and \ref{forgfunct}.  Fix $\lambda$ a regular cardinal such that for any $\E\gg 0$ we have that $G_\E$ is $\lambda$-accessible and preserves $\lambda$-presentable objects (see Remark~\ref{thmprep}(3)).

Let $\kappa$ be a $L_{\mu_{\cl}^{\V},\omega}$-compact cardinal. Since $\mu_{\cl}^{\V}\triangleright\lambda$, $G_\E$ is $\mu_{\cl}^{\V}$-accessible and preserves $\mu_{\cl}^{\V}$-presentable objects.  So, by Theorem~\ref{BTRTh}, we have that the powerful image (that is, the full image, Remark~\ref{thmprep}(2)) of any $G_\E$ is $\kappa$-accessible.

Let $\E\gg 0$ and $p_i=\gatp(f_i:M\to N_i,a_i)$, $i=0,1$, such that for all $\kappa$-presentable subobjects $h:X\to M$ we have that $\E\gg d(p_0\restr h,p_1\restr h)$, where $p_i\restr h$ denotes the Galois type $\gatp(f_i\circ h: X\to N_i,a_i)$. By precisely the argument of \cite[5.5]{LiRo17}, there exists a cofinal set $D_\delta$ of morphisms $\chi: X\to M$ ($X$ $\kappa$-presentable) such that $d(p_0\restr \chi,p_1\restr \chi)=\delta$ for a single $\delta$, $\E\gg\delta$.  That argument, which proceeds by contradiction and hinges on counting potential witnesses to the failure of this result (here there are no more than $|\V|$, rather than the $2^{\aleph_0}$ mentioned in \cite{LiRo17}), is omitted here in the interest of space.





Let $\chi:X\to M$ be in $D_{\delta}$, meaning that $d(p_0\restr \chi,p_1\restr \chi)=\delta$. We show, first, that there exists a decreasing sequence $s_\chi=\langle \delta_n:n\in\omega\rangle $ such that $\delta=\bigwedge \{\delta_n: n\in\omega\}$ and $(f_0\circ \chi,f_1\circ \chi, a_0, a_1)$ belongs to the full image of $G_{\delta_n}$ for each $n$.  It is an immediate consequence of the sequential approximation property that there exists a decreasing sequence $\langle \delta^*_n: n\in\omega \rangle$ with $\delta^*_n\gg \delta$ such that $\delta=\bigwedge \{ \delta^*_n: n\in\omega\}$. Recall from the discussion above that we can obtain the distance $\delta=d(p_0\restr \chi,p_1\restr \chi)$ as the meet of the distances $d_N(Ug_0(a_0),Ug_1(a_1))$, where $g_i:{N}_i\to {N}$ and $g_0\circ (f_0 \circ \chi)= g_1\circ (f_1\circ \chi)$.  Since $\delta^*_n\gg\delta$, by constructively complete distributivity there exist $g^n_i:N_i\to N_n$, $i=0,1$, such that 
$$\delta^*_n\gg d_{N_n}(Ug^n_0(a_0),Ug^n_1(a_1))=\delta_n.$$ 
 
Let $\delta'=\bigwedge\{\delta_n:n\in\omega\}$. Because $\delta=d(p_0\circ \chi,p_1\circ\chi)$ is the meet of the distances $d_N(Ug_0(a_0),Ug_1(a_1))$, which include $d_{N_n}(Ug^n_0(a_0),Ug^n_1(a_1))=\delta_n$ for all $n\in\omega$, it must be the case that $\delta\le \bigwedge \{\delta_n: n\in\omega\}=\delta'$. On the other hand, since
$$\delta^*_n\gg d_{N_n}(Ug^n_0(a_0),Ug^n_1(a_1))=\delta_n\geq\bigwedge \{\delta_n:n\in\omega\}=\delta'$$
for all $n\in\omega$, $\delta'\le\bigwedge  \{\delta^*_n: n\in\omega\}=\delta$. Therefore $\delta=\delta'=\bigwedge \{\delta_n:n\in\omega\}$. Moreover, since $\delta_n=d_{N_n}(Ug^n_0(a_0),Ug^n_1(a_1))$, the pair 
$(f_0\restr \chi,f_1\restr \chi, a_0, a_1)$ belongs to the full image of $G_{\delta_n}$.

We may assume that there exists a cofinal set $D_s\subseteq D_{\delta}$ realizing the same sequence $\langle \delta_n:n\in\omega \rangle$. The argument, again by contradiction, is a slight variation on the one above.

Notice that $(f_0,f_1,a_0,a_1)$ can be obtained as the colimit of the $\kappa$-directed diagram of restrictions $(f_0\circ\chi,f_1\circ\chi,a_0,a_1)$, where $\chi:X\to M$, $X$ $\kappa$-presentable.  Since $D_s$ is cofinal in the colimit diagram for $M$, the colimit of $(f_0\circ\chi,f_1\circ\chi,a_0,a_1)$, $\chi\in D_s$, is precisely the same as that of the larger diagram; that is, $(f_0,f_1,a_0,a_1)$. Given that $G_{\delta_\E}$ is $\kappa$-accessible for all $\E\gg 0$, it is closed under $\kappa$-directed colimits; since $(f_0\circ\chi,f_1\circ\chi,a_0,a_1)\in G_{\delta_n}$ for all $\chi\in D_s$, $(f_0,f_1,a_0,a_1)$ belongs to the full image of $G_{\delta_n}$ for all $n\in\omega$. Therefore $d(p_0,p_1)\le \delta_n$ for all $n\in\omega$, so $d(p_0,p_1)\le \bigwedge \{\delta_n:n\in\omega\}=\delta$, meaning that $\E\gg d(p_0,p_1)$, as desired.
\end{proof}

\begin{remark}
	We note that the sequential approximation property, Definition \ref{defsflagg}(2), is not strictly necessary in this proof: by Lemma~\ref{Closeness}, we can always find a \emph{directed system} converging to $\delta$, which plays the role of the sequence in the proof above.  With this minor modification, and a great deal more bookkeeping, the argument proceeds just as above.
\end{remark}

It is important to note that Theorem~\ref{VTameness} goes beyond the analogous result for mAECs in \cite{LiRo17} in a number of respects, beyond replacing the reals with an arbitrarily strongly Flagg quantale: in particular, \cite[5.5]{LiRo17} assumes the CTP (Definition~\ref{CTP}) and that the underlying spaces are metric, rather than pseudometric.  While these conditions ensure that we have a metric---rather than a pseudometric---structure on the sets of Galois types (see Proposition~\ref{propctpmetr}), it transpires that this is not, in fact, necessary for the argument.

\section{Future directions}\label{secpartial}

For practical reasons, we have thus far worked with a dual formulation of quantales.  We now transform back to the more conventional formulation, laid out in Section~\ref{subsecquantprelims}: up to dualization in the quantale, any pseudometric of the kind described in Definition~\ref{defvmetr} corresponds to a map $E:M\times M\to\V^{op}$ satisfying the following conditions, among others:
\begin{enumerate}
	\item (Symmetry) $E(x,y)=E(y,x)$
	\item (Transitivity) $E(x,y)\wedge E(y,z)\le E(x,z)$
\end{enumerate}
If we restrict to the case in which the quantale is actually a frame, $\Omega$, this is precisely the definition of an {\it $\Omega$-valued set} (or {\it $\Omega$-set}), in the sense of \cite{fourmanscott}.  This forms the base for an important association: $\Omega$-sets are precisely the presheaves on $\Omega$, meaning that $\Omega$-pseudometric spaces can be thought of in this way as well.  Moreover, $\Omega$-metric spaces correspond to {\it separated $\Omega$-sets}, and thus to {\it separated presheaves on $\Omega$}.

There is a difficulty: in our $\Omega$-pseudometric spaces, we assume reflexivity; that is, $d(x,x)=0$ for all $x$.  In the case of $\Omega$-sets---or presheaves on $\Omega$---this means that $E(x,x)$ is the top element of $\Omega$ for any $x$, meaning that, as mentioned in Remark~\ref{rmkvsps}(2), all local sections of such a presheaf are global.  This suggests strongly that we have restricted ourselves too much, as locality is precisely what gives presheaf- and sheaf-theoretic semantics their added flexibility and power.  To avoid this problem, we must drop reflexivity as an axiom.  While this is a disturbing notion, it lands us in the realm of {\it partial metric spaces}, which have been well-studied in the context of, e.g., complexity in parallel computing (cf. \cite{mattflow}, \cite{wadgeflow}).  

Pending a careful study of completeness in partial $\Omega$-metric spaces---\cite{hofreisconv} is closely related---it should be possible to show that Cauchy complete partial $\Omega$-spaces correspond to {\it complete $\Omega$-sets}, and hence to {\it complete presheaves on $\Omega$}.  That is, they are {\it sheaves on $\Omega$}.  Simple computations reveal that nonexpanding maps of complete partial $\Omega$-spaces are precisely the natural transformations between the corresponding sheaves, so we in fact have a proper equivalence of categories.  This means that any results for the partial analog of $\Omega$-AECs in the sense of this paper will translate immediately to the context of structures on sheaves (or, if you prefer, sheaves of constant structures).  The result on the consistency of tameness, Theorem~\ref{VTameness}, requires total monotonicity, which is too much to ask in a general frame (recall Example~\ref{exsflagg}(4)), but this should still be a fruitful connection.

Although we will not dwell on this point, \cite{fuzzysetsi} and \cite{fuzzysetsii} illustrate the equivalence between fuzzy sets and sheaves, meaning that our results should transfer immediately to this context as well.

\bibliographystyle{spbasic}      

\end{document}